\documentclass{amsart}
\usepackage{amsfonts,amssymb,amsmath,amsthm,mathrsfs}
\usepackage{url}
\usepackage{enumerate}

\urlstyle{sf}
\newtheorem{theorem}{Theorem}[section]
\newtheorem{lemma}[theorem]{Lemma}

\theoremstyle{definition}

\newtheorem{remark}[theorem]{Remark}
\numberwithin{equation}{section}

\author[J. Lan]{Jiacheng Lan}
\address{Jiacheng Lan, College of Teacher Education, Lishui University, Lishui 323000, P. R. China}
\email{lanjc.ls@163.com}
\author[X. Tao]{Xiangxing Tao}
\address{Xiangxing Tao, Department of Mathematics, School of Science, Zhejiang University of Science and Technology,
Hangzhou 310023, P. R. China}
\email{xxtao@zust.edu.cn}
\author[G. Hu]{Guoen Hu}
\address{Guoen Hu, School of  Applied Mathematics, Beijing Normal University,
Zhuhai 519087,
P. R. China}
\email{guoenxx@163.com}
\thanks{ }

\keywords{commutator,
singular integral operator, sparse operator, maximal operator}
\subjclass{Primary 42B20, Secondary 47A30}
\begin{document}

\title[commutator]{weak type endpoint estimates for the commutators of rough singular integral operators}

\begin{abstract}
Let $\Omega$ be homogeneous of degree zero and have mean value zero on the unit sphere ${S}^{n-1}$, $T_{\Omega}$  be the convolution singular integral operator  with kernel $\frac{\Omega(x)}{|x|^n}$. For $b\in{\rm BMO}(\mathbb{R}^n)$, let $T_{\Omega,\,b}$ be the commutator of $T_{\Omega}$. In this paper, by establishing suitable sparse   dominations, the authors  establish some  weak type endpoint estimates of $L\log L$ type for $T_{\Omega,\,b}$ when $\Omega\in L^q(S^{n-1})$ for some $q\in (1,\,\infty]$.
\end{abstract}
\maketitle
\section{Introduction}
We will work on $\mathbb{R}^n$, $n\geq 2$. Let $\Omega$ be
homogeneous of degree zero, integrable and have mean value zero on
the unit sphere ${S}^{n-1}$. Define the  singular integral operator
${T}_{\Omega}$ by
\begin{eqnarray}\label{eq1.1}{T}_{\Omega}f(x)={\rm p.\,v.}\int_{\mathbb{R}^n} \frac
{\Omega( y')}{|y|^n}f(x-y)dy,\end{eqnarray}  where and in the following, $y'=y/|y|$ for $y\in\mathbb{R}^n$. This operator was introduced by
Calder\'on and Zygmund \cite{cz1}, and then studied by many authors
in the last sixty years.  Calder\'on and Zygmund \cite {cz2} proved that
if $\Omega\in L\log L({S}^{n-1})$, then $T_{\Omega}$ is bounded on
$L^p(\mathbb{R}^n)$ for $p\in (1,\,\infty)$.   Ricci
and Weiss \cite{rw} improved the result of Calder\'on-Zygmund, and
showed that $\Omega\in H^1(S^{n-1})$ guarantees the
$L^p(\mathbb{R}^n)$ boundedness on $L^p(\mathbb{R}^n)$ for $p\in
(1,\,\infty)$. Seeger \cite{se} showed that $\Omega\in L\log
L(S^{n-1})$ is a sufficient condition such that $T_{\Omega}$ is bounded
from $L^1(\mathbb{R}^n)$ to $L^{1,\,\infty}(\mathbb{R}^n)$.
For other works about the $L^p(\mathbb{R}^n)$ boundedness and weak type endpoint estimates for $T_{\Omega}$, we refer the papers
see \cite{chr2,duo,drf,fp,gs,rw,var} and the references therein.

Now let $T$ be a linear operator from $\mathcal{S}(\mathbb{R}^n)$ to $\mathcal{S}'(\mathbb{R}^n)
$ and $b\in{\rm BMO}(\mathbb{R}^n)$. The commutator of $T$ with symbol $b$, is defined by
$$T_bf(x)=b(x)Tf(x)-T(bf)(x).$$
A celebrated result of Coifman,  Rochberg and  Weiss \cite{crw} states  that if $T$ is a Calder\'on-Zygmund operator, then $T_{b}$ is bounded on $L^p(\mathbb{R}^n)$ for every $p\in (1,\,\infty)$ and also a converse result in
terms of the Riesz transforms. P\'erez \cite{perez1} considered the weak type endpoint estimate for the commutator of Calder\'on-Zygmund operator, and  proved the following result.
\begin{theorem}\label{thmperez}
Let $T$ be a Calder\'on-Zygmund operator and $b\in {\rm BMO}(\mathbb{R}^n)$. Then for any $\lambda>0$,
$$|\{x\in\mathbb{R}^n:\,|T_{b}f(x)|>\lambda\}|\lesssim_n\int_{\mathbb{R}^n}\frac{|f(x)|}{\lambda}\log \Big({\rm e}+\frac{|f(x)|}{\lambda}\Big)dx.$$
\end{theorem}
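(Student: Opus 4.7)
By homogeneity we may normalize so that $\lambda=1$, and then perform a standard Calder\'on--Zygmund decomposition of $|f|$ at height $1$, producing pairwise disjoint cubes $\{Q_j\}$ with $\sum_j|Q_j|\lesssim\|f\|_1$ and a splitting $f=g+h$, $h=\sum_j h_j$, where $\|g\|_\infty\lesssim 1$, $\|g\|_1\le\|f\|_1$, each $h_j$ is supported in $Q_j$ with $\int h_j=0$, and $\|h_j\|_1\lesssim\int_{Q_j}|f|$. Set $\Omega^{*}=\bigcup_j 2Q_j$; then $|\Omega^{*}|\lesssim\|f\|_1$. For the good part, $g\in L^2$ with $\|g\|_2^2\lesssim\|g\|_\infty\|g\|_1\lesssim\|f\|_1$, so the Coifman--Rochberg--Weiss $L^2$-boundedness of $T_b$ together with Chebyshev's inequality yields $|\{|T_b g|>1/2\}|\lesssim\|f\|_1$. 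Since $\|f\|_1$ is dominated by the right-hand side of the theorem, only the bad part on $(\Omega^{*})^c$ remains to be treated.

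On $(\Omega^{*})^c$, the key algebraic identity, obtained by adding and subtracting $b_{Q_j}Th_j$, reads
$$T_b h_j(x)=\bigl(b(x)-b_{Q_j}\bigr)Th_j(x)-T\bigl((b-b_{Q_j})h_j\bigr)(x).$$
The first summand is handled by classical techniques: the vanishing moment of $h_j$ combined with H\"ormander smoothness of the kernel gives $|Th_j(x)|\lesssim\|h_j\|_1\,\ell(Q_j)|x-c_{Q_j}|^{-n-1}$ for $x\notin 2Q_j$, and the logarithmic BMO growth supplied by John--Nirenberg makes the integral $\int_{(2Q_j)^{c}}|b(x)-b_{Q_j}|\,\ell(Q_j)|x-c_{Q_j}|^{-n-1}dx$ converge to a constant. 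Summation in $j$ produces a contribution of order $\|f\|_1$.

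The principal obstacle is the second summand $T((b-b_{Q_j})h_j)$, which carries no cancellation. My strategy is to assemble all these terms into the single function $F=\sum_j(b-b_{Q_j})h_j\in L^1(\mathbb{R}^n)$ (well-defined because the $Q_j$ are pairwise disjoint) and invoke the weak-type $(1,1)$ bound of the underlying Calder\'on--Zygmund operator $T$:
$$\bigl|\{x:|TF(x)|>\tfrac12\}\bigr|\lesssim\|F\|_1=\sum_j\int_{Q_j}|b(y)-b_{Q_j}|\,|h_j(y)|\,dy.$$
The $L\log L$ modulus now emerges from the generalized H\"older inequality in the complementary Orlicz spaces $\exp L$ and $L\log L$:
$$\int_{Q_j}|b-b_{Q_j}||h_j|\lesssim|Q_j|\,\|b-b_{Q_j}\|_{\exp L,Q_j}\|h_j\|_{L\log L,Q_j}\lesssim|Q_j|\,\|b\|_{{\rm BMO}}\|h_j\|_{L\log L,Q_j}.$$
Using $|h_j|\le|f|\chi_{Q_j}+(|f|)_{Q_j}\chi_{Q_j}$ together with the CZ property $(|f|)_{Q_j}\lesssim 1$, one bounds $|Q_j|\|h_j\|_{L\log L,Q_j}\lesssim\int_{Q_j}|f|\log(\mathrm{e}+|f|)+|Q_j|$, and summing over the disjoint cubes reproduces exactly the right-hand side of the theorem. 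The Orlicz--H\"older step is the decisive mechanism that forces the $L\log L$ scale in place of $L^1$; everything else is a rearrangement of the classical weak-type argument.
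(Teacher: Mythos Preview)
The paper does not actually prove this statement: Theorem~\ref{thmperez} is quoted in the introduction as a known result of P\'erez \cite{perez1}, with no proof supplied. So there is no ``paper's own proof'' to compare against.

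Your argument is correct and is, in fact, the original P\'erez proof. The Calder\'on--Zygmund decomposition at height $1$, the $L^2$ treatment of the good part via the Coifman--Rochberg--Weiss theorem, the splitting
\[
T_b h_j(x)=\bigl(b(x)-b_{Q_j}\bigr)Th_j(x)-T\bigl((b-b_{Q_j})h_j\bigr)(x),
\]
the annular integration using kernel regularity together with the John--Nirenberg logarithmic growth for the first piece, and the weak $(1,1)$ bound for $T$ applied to $F=\sum_j(b-b_{Q_j})h_j$ followed by the $\exp L$--$L\log L$ H\"older inequality for the second piece, are exactly the steps in \cite{perez1}. The only detail worth tightening is the passage from $|Q_j|\,\|h_j\|_{L\log L,Q_j}$ to $\int_{Q_j}|f|\log(\mathrm{e}+|f|)+|Q_j|$: this uses that the Calder\'on--Zygmund stopping guarantees $\langle|h_j|\rangle_{Q_j}\lesssim 1$, so the Luxemburg norm is comparable to the unnormalized Orlicz integral on $Q_j$. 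With that remark made explicit, the proof is complete.

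It may be worth noting that the machinery the paper develops for its main results (Theorems~\ref{thm1.1} and~\ref{thm1.2})---sparse domination of $T_b$ as in Theorem~\ref{thm3.1}, combined with the weighted endpoint Lemma~\ref{lem2.5}---would also recover Theorem~\ref{thmperez} as the unweighted special case, via a route that avoids the explicit Calder\'on--Zygmund decomposition. But the paper does not pursue this, since the classical case is already in the literature.
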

By Theorem \ref{thmperez}, we know that if $\Omega\in {\rm Lip}_{\alpha}(S^{n-1})$ with $\alpha\in (0,\,1]$, then for $b\in {\rm BMO}(\mathbb{R}^n)$,  $T_{\Omega,\,b}$,  the commutator of $T_{\Omega}$,  satisfies that,
\begin{eqnarray}\label{eq1.2}|\{x\in\mathbb{R}^n:\,|T_{\Omega,\,b}f(x)|>\lambda\}|\lesssim_n\int_{\mathbb{R}^n}\frac{|f(x)|}{\lambda}\log \Big({\rm e}+\frac{|f(x)|}{\lambda}\Big)dx.\end{eqnarray}

Let $p\in [1,\,\infty)$ and $w$ be a nonnegative, locally integrable function on $\mathbb{R}^n$. We say that   $w\in A_{p}(\mathbb{R}^n)$ if the $A_p$ constant $[w]_{A_p}$ is finite, with
$$[w]_{A_p}:=\sup_{Q}\Big(\frac{1}{|Q|}\int_Qw(x)dx\Big)\Big(\frac{1}{|Q|}\int_{Q}w^{1-p'}(x)dx\Big)^{p-1},\,\,\,p\in (1,\,\infty),$$
the  supremum is taken over all cubes in $\mathbb{R}^n$,  $p'=p/(p-1)$ and
$$[w]_{A_1}:=\sup_{x\in\mathbb{R}^n}\frac{Mw(x)}{w(x)},$$ see \cite{gra} for the properties of $A_p(\mathbb{R}^n)$.
For a weight $w\in A_{\infty}(\mathbb{R}^n)=\cup_{p\geq 1}A_p(\mathbb{R}^n)$, define $[w]_{A_{\infty}}$, the $A_{\infty}$ constant of $w$, by
$$[w]_{A_{\infty}}=\sup_{Q\subset \mathbb{R}^n}\frac{1}{w(Q)}\int_{Q}M(w\chi_Q)(x)dx,$$
see \cite{wil}. By the result of Duandikoetxea and Rubio de Francia \cite{drf}, and  the result in \cite{duo}, we know that if $\Omega\in L^q(S^{n-1})$ for some $q\in (1,\,\infty]$, then for $p\in (q',\,\infty)$ and $w\in A_{p/q'}(\mathbb{R}^n)$
$$\|T_{\Omega}f\|_{L^p(\mathbb{R}^n,\,w)}\lesssim_{n,p,w}\|f\|_{L^p(\mathbb{R}^n,\,w)}.$$
This, together with Theorem 2.13 in \cite{abkp}, tells us that if $\Omega\in L^q(S^{n-1})$ for $q\in (1\,\infty]$, then for $b\in {\rm BMO}(\mathbb{R}^n)$,
$$\|T_{\Omega,\,b}f\|_{L^p(\mathbb{R}^n,\,w)}\lesssim_{n,p,w}\|b\|_{{\rm BMO}(\mathbb{R}^n)}\|f\|_{L^p(\mathbb{R}^n,\,w)},\,\,p\in (q',\,\infty),\,\,w\in A_{p/q'}(\mathbb{R}^n).$$
However, as far as we know, there is no result concerning the weak type endpoint estimate for $T_{\Omega,\,b}$ when $\Omega$ only satisfies   size condition. In this paper, we consider this question. Our first result can be stated as follows.

\begin{theorem}\label{thm1.1}
Let $\Omega$ be homogeneous of degree zero and have mean value zero on $S^{n-1}$, $b\in {\rm BMO}(\mathbb{R}^n)$. Suppose that $\Omega\in L^{q}(S^{n-1})$  for some $q\in (1,\,\infty)$, then for any   $\lambda>0$ and weight $w$ such that $w^{q'}\in A_1(\mathbb{R}^n)$,
\begin{eqnarray*}&&w\big(\{x\in\mathbb{R}^n:\,|T_{\Omega,\,b}f(x)|>\lambda\}\big)\lesssim_{n,\,w}\int_{\mathbb{R}^n}\frac{D|f(x)|}{\lambda}\log \Big({\rm e}+\frac{D|f(x)|}{\lambda}\Big)w(x)dx,
\end{eqnarray*}
with $D=\|\Omega\|_{L^{q}(S^{n-1})}\|b\|_{{\rm BMO}(\mathbb{R}^n)}.$
\end{theorem}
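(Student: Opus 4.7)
The strategy has two stages: first establish a bilinear sparse domination for $T_{\Omega,b}$ adapted to the rough kernel, and then convert that sparse form into the weighted weak-type $L\log L$ inequality through a Calder\'on--Zygmund decomposition.

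For the sparse domination, I would normalize $\|\Omega\|_{L^q(S^{n-1})}=\|b\|_{{\rm BMO}(\mathbb{R}^n)}=1$ and split the kernel $K(y)=\Omega(y')/|y|^n$ into smooth dyadic annular pieces $K_k(y)=K(y)\varphi(2^{-k}|y|)$ with $\varphi$ supported in $[1/2,2]$. Write $T_{\Omega,k}f=K_k\ast f$, so $T_{\Omega,b}=\sum_k[b,T_{\Omega,k}]$. For each cube $Q_0$, introduce the grand maximal commutator truncation
\begin{equation*}
\mathcal{M}^{\#}_{T_{\Omega,b},Q_0}f(x)=\sup_{Q\ni x,\,Q\subset Q_0}\operatorname*{ess\,sup}_{\xi\in Q}\bigl|T_{\Omega,b}(f\chi_{\mathbb{R}^n\setminus 3Q})(\xi)\bigr|.
\end{equation*}
The analytic core is to prove the weak-type estimate
\begin{equation*}
\bigl\|\mathcal{M}^{\#}_{T_{\Omega,b},Q_0}f\bigr\|_{L^{q',\infty}(Q_0)}\lesssim \bigl\|(b-b_{3Q_0})f\bigr\|_{L^{q'}(3Q_0)}+\|f\|_{L^{q'}\log L,\,3Q_0},
\end{equation*}
by summing over $k$ the Fourier-decay estimates of Duoandikoetxea--Rubio de Francia type for each smooth commutator piece $[b,T_{\Omega,k}]$. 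With this in hand, Lerner's recursive stopping-cube selection produces a $\tfrac{1}{2}$-sparse family $\mathcal{S}(Q_0)$ of dyadic subcubes of $Q_0$ such that, for a.e.\ $x$,
\begin{equation*}
|T_{\Omega,b}f(x)|\lesssim \sum_{Q\in\mathcal{S}(Q_0)}\bigl(|b(x)-b_Q|\langle f\rangle_{q',Q}+\langle(b-b_Q)f\rangle_{q',Q}\bigr)\chi_Q(x),
\end{equation*}
which is the sparse form that drives the proof.

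To extract the endpoint, fix $\lambda>0$, assume $D=1$, and apply a Calder\'on--Zygmund decomposition of $f$ at height $\lambda$ to write $f=g+h$ with $h=\sum_j h_j$, $\operatorname{supp}h_j\subset Q_j$, $\int h_j=0$, $\|g\|_\infty\lesssim\lambda$, and $\sum_j|Q_j|\lesssim\lambda^{-1}\|f\|_1$. Let $E=\bigcup_j 9nQ_j$. Since $w^{q'}\in A_1$ forces $w\in A_1\subset A_\infty$, one has $w(E)\lesssim\int\tfrac{|f|}{\lambda}w\,dx$, which is dominated by the right-hand side of the claim. On $E^c$, the contribution $T_{\Omega,b}g$ is handled by the weighted $L^p$ bound for some $p>q'$ with $w\in A_{p/q'}$ (which is a consequence of $w^{q'}\in A_1$) combined with Chebyshev. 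For $T_{\Omega,b}h$, apply the sparse form to $h$: the $|b(x)-b_Q|$ factor is absorbed via John--Nirenberg and Young's inequality into an Orlicz maximal operator $M_{L(\log L)}$ acting on the averages, and the weighted $L\log L$ endpoint of $M_{L(\log L)}$ on $A_1$-weights, together with the Carleson packing of $\mathcal{S}$, yields the claimed bound.

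The principal obstacle is the sparse domination step: the lack of smoothness of $\Omega$ means one cannot appeal to pointwise kernel estimates and must instead prove the weak-$(q',q')$ bound for the grand maximal commutator truncation by a delicate summation over the smooth dyadic pieces $[b,T_{\Omega,k}]$, with quantitative decay strong enough to absorb the logarithmic loss introduced by the BMO symbol. A secondary subtlety is verifying that the hypothesis $w^{q'}\in A_1$ is exactly the strength needed to transfer the $L^{q'}$-averages of the sparse form into the $L\log L$ modulus of the right-hand side through the weighted Orlicz maximal estimate.
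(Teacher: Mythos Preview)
Your sketch has the right overall architecture (sparse domination followed by an endpoint extraction), but there are two genuine gaps that prevent the argument from closing, and the route taken in the paper avoids both by making a different choice at the sparse-domination stage.

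First, the grand maximal object you write down,
\[
\mathcal{M}^{\#}_{T_{\Omega,b},Q_0}f(x)=\sup_{Q\ni x,\,Q\subset Q_0}\operatorname*{ess\,sup}_{\xi\in Q}\bigl|T_{\Omega,b}(f\chi_{\mathbb{R}^n\setminus 3Q})(\xi)\bigr|,
\]
uses an essential supremum over the cube. For rough $\Omega\in L^q(S^{n-1})$ there is no reason for $T_{\Omega}(f\chi_{\mathbb{R}^n\setminus 3Q})$ to be locally bounded, so this quantity is typically infinite and no weak-$(q',q')$ estimate holds. The paper replaces the $\operatorname*{ess\,sup}$ by an $L^{p_0}$ average, working instead with $\mathscr{M}_{p_0,T_\Omega}$, and the core technical input is the \emph{weak-$(1,1)$} bound for $\mathscr{M}_{p_0,T_\Omega}$ (deduced from Lerner's estimate for $M_{\lambda,T_\Omega}$ after splitting $\Omega$ into a bounded piece and an $L^{1+\varepsilon}$ piece). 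This is what permits Lerner's stopping argument to run.

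Second, and more seriously, your pointwise sparse form places the $L^{q'}$-averages on $f$:
\[
|T_{\Omega,b}f(x)|\lesssim \sum_{Q\in\mathcal{S}}\bigl(|b(x)-b_Q|\langle f\rangle_{q',Q}+\langle(b-b_Q)f\rangle_{q',Q}\bigr)\chi_Q(x).
\]
A sparse operator with $L^{q'}$-averages of $f$ has its natural endpoint at $p=q'$, not at $p=1$; it cannot yield $w(\{|T_{\Omega,b}f|>\lambda\})\lesssim\int\tfrac{|f|}{\lambda}\log(e+\tfrac{|f|}{\lambda})\,w\,dx$. Your attempted repair via Calder\'on--Zygmund decomposition breaks precisely here: applying the sparse form to the bad part $h=\sum_j h_j$ produces $\langle h\rangle_{q',Q}$, but $h$ lies only in $L^1$ (its $L^{q'}$-norm is uncontrolled), and the mean-zero cancellation of the $h_j$ is invisible to the sparse operator. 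The paper avoids this by producing a \emph{bilinear} sparse form in which the roughness is pushed to the dual side: it shows
\[
\Bigl|\int_{\mathbb{R}^n}T_{\Omega,b}f(x)\,g(x)\,dx\Bigr|\lesssim \mathcal{A}_{\mathcal{S};\,L\log L,\,L^{q'(1+\varepsilon)}}(f,g)=\sum_{Q\in\mathcal{S}}|Q|\,\|f\|_{L\log L,\,Q}\,\langle|g|\rangle_{Q,\,q'(1+\varepsilon)},
\]
so that the $f$-side carries only the $L\log L$ average while $g$ absorbs the $L^{q'(1+\varepsilon)}$ average. A general lemma then converts such a form into the $L\log L$ weak endpoint for any weight with $w^{q'(1+\varepsilon)}\in A_1$; since $w^{q'}\in A_1$ implies $w^{q'(1+\varepsilon)}\in A_1$ for small $\varepsilon$ by the reverse H\"older property, the hypothesis of the theorem is exactly what is needed. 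In short, the correct placement of the $q'$-exponent is on $g$, not on $f$, and this is what your proposal misses.
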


In the last several years, considerable attention has been paid to the quantitative weighted bounds for $T_{\Omega}$ when $\Omega\in L^{\infty}(S^{n-1})$. The first result in this area was established by Hyt\"onen,  Roncal  and  Tapiola \cite{hrt}, who proved that for $p\in (1,\,\infty)$ and $w\in A_p(\mathbb{R}^n)$,
\begin{eqnarray}\label{eq1.3}\|T_{\Omega}f\|_{L^p(\mathbb{R}^n,\,w)}\lesssim_{n,p} \|\Omega\|_{L^{\infty}(S^{n-1})}[w]^{2\max\{1,\frac{1}{p-1}\}}\|f\|_{L^p(\mathbb{R}^n,\,w)}.\end{eqnarray}
Li,  P\'erez,  Rivera-Rios and  Roncal \cite{lpr} improved (\ref{eq1.3}) and  showed that for $p\in (1,\,\infty)$ and  $w\in A_p(\mathbb{R}^n)$
\begin{eqnarray}\label{eq1.4}&&\|T_{\Omega}f\|_{L^p(\mathbb{R}^n,\,w)}\lesssim_{n,p}[w]_{A_p}^{\frac{1}{p}} \big([w]_{A_{\infty}}^{\frac{1}{p'}}+[\sigma]_{A_{\infty}}^{\frac{1}{p}}\big)
\min\{[\sigma]_{A_{\infty}},\,[w]_{A_{\infty}}\}\|f\|_{L^p(\mathbb{R}^n,\,w)},\end{eqnarray}
where and in the following, for $w\in A_p(\mathbb{R}^n)$, $\sigma=w^{1-p'}$.
The estimate (\ref{eq1.4}),  via the method  in \cite{cpp}, implies the following quantitative weighted estimate
\begin{eqnarray*}\|T_{\Omega,\,b}f\|_{L^p(\mathbb{R}^n,\,w)}&\lesssim_{n,p} & [w]_{A_p}^{\frac{1}{p}} \big([w]_{A_{\infty}}^{\frac{1}{p'}}+[\sigma]_{A_{\infty}}^{\frac{1}{p}}\big)\min\{[\sigma]_{A_{\infty}},\,[w]_{A_{\infty}}\}\\
&&\quad\times ([w]_{A_{\infty}}+[\sigma]_{A_{\infty}})
\|f\|_{L^p(\mathbb{R}^n,\,w)}.\end{eqnarray*}
Rivera-R\'ios \cite{riv} established the sparse domination for $T_{\Omega,\,b}$ when $\Omega\in L^{\infty}(S^{n-1})$, and proved that for $p\in (1,\,\infty)$ and $w\in A_1(\mathbb{R}^n)$,
$$\|T_{\Omega,\,b}f\|_{L^p(\mathbb{R}^n,\,w)}\lesssim_{n,\,p} \|\Omega\|_{L^{\infty}(S^{n-1})} p'^3p^2[w]_{A_1}^{\frac{1}{p}}[w]_{A_{\infty}}^{1+\frac{1}{p'}}\|f\|_{L^p(\mathbb{R}^n,\,w)}.$$
Our second result is the following quantitative weighted weak type estimate for $T_{\Omega,\,b}$.
\begin{theorem}\label{thm1.2}
Let $\Omega$ be homogeneous of degree zero and have mean value zero on $S^{n-1}$, $b\in {\rm BMO}(\mathbb{R}^n)$. Suppose that $\Omega\in L^{\infty}(S^{n-1})$ and $w\in A_1(\mathbb{R}^n)$,  then for any $\lambda>0$,
\begin{eqnarray*}&&w(\{x\in\mathbb{R}^n:\,|T_{\Omega,\,b}f(x)|>\lambda\})\\
&&\quad\lesssim_n [w]_{A_1}[w]_{A_{\infty}}^2\log({\rm e}+[w]_{A_{\infty}})\int_{\mathbb{R}^n}\frac{D_{\infty}|f(x)|}{\lambda}\log \Big({\rm e}+\frac{D_{\infty}|f(x)|}{\lambda}\Big)w(x)dx,
\end{eqnarray*}
with $D_{\infty}=\|\Omega\|_{L^{\infty}(S^{n-1})}\|b\|_{{\rm BMO}(\mathbb{R}^n)}.$
\end{theorem}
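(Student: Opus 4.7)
The plan is to combine a suitable bilinear sparse form domination of $T_{\Omega,b}$ with a quantitative weighted weak-type $L\log L$ estimate for the resulting sparse operator, tracking constants through the Orlicz--reverse H\"older machinery for $A_\infty$ weights.

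First, following the grand maximal truncation scheme of Lerner and its commutator adaptation by Rivera-R\'ios, I would establish, for all compactly supported $f, g$, a bilinear sparse form of the schematic type
\begin{equation*}
|\langle T_{\Omega,b} f, g\rangle| \lesssim \|\Omega\|_{L^{\infty}(S^{n-1})}\!\!\sum_{Q \in \mathcal{S}} |Q|\,\|b - b_Q\|_{\exp L, Q}\Big(\langle |f|\rangle_{L\log L, Q}\,\langle|g|\rangle_Q + \langle |f|\rangle_Q\,\langle|g|\rangle_{L\log L, Q}\Big),
\end{equation*}
valid for some $\tfrac{1}{2}$-sparse family $\mathcal{S}$. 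The $\exp L$ factor captures the BMO oscillation of $b$ (via John--Nirenberg, $\|b-b_Q\|_{\exp L, Q}\lesssim \|b\|_{\mathrm{BMO}}$), while the $L\log L$ averages match the natural endpoint class of the rough kernel as in the sparse bound of Conde-Alonso--Culiuc--Di Plinio--Ou for $T_\Omega$ itself.

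Absorbing the constant $\|\Omega\|_{L^\infty}\|b\|_{\mathrm{BMO}} = D_\infty$ and dualizing, the problem reduces to a weighted weak-type endpoint estimate for the scalar sparse operator $\mathcal{A} f(x) = \sum_{Q \in \mathcal{S}}\langle |f|\rangle_{L\log L, Q}\chi_Q(x)$, namely
\begin{equation*}
w\bigl(\{x : \mathcal{A} f(x) > \lambda\}\bigr) \lesssim [w]_{A_1}[w]_{A_\infty}^2\log(\mathrm{e}+[w]_{A_\infty})\!\int_{\mathbb{R}^n}\frac{|f(x)|}{\lambda}\log\!\Big(\mathrm{e}+\frac{|f(x)|}{\lambda}\Big) w(x)\,dx.
\end{equation*}
To prove this, I would run a Calder\'on--Zygmund decomposition of $f$ at level $\lambda$ with respect to the $L\log L$ Luxemburg gauge, writing $f = g + h$ with $h$ supported on disjoint cubes $\{Q_j\}$ where $\langle|f|\rangle_{L\log L, Q_j}\sim \lambda$ and $\|g\|_{L^\infty}\lesssim \lambda$. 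The bad set $\bigcup_j 3Q_j$ has $w$-measure $\lesssim [w]_{A_1}\lambda^{-1}\!\int|f|\log(\mathrm{e}+|f|/\lambda)\,w\,dx$ directly from the $A_1$ condition. For the good part, I would apply Chebyshev at exponent $2$ together with a weighted $L^2$ estimate of $\mathcal{A}$ restricted to the remaining cubes, and invoke the sharp reverse H\"older inequality for $w \in A_\infty$ with exponent $1+c/[w]_{A_\infty}$ to convert each Luxemburg average $\langle|g|\rangle_{L\log L, Q}$ into an $L^1(w)$-average; the resulting loss $1/\epsilon\sim [w]_{A_\infty}$ combined with the Orlicz gauge growth produces the factor $[w]_{A_\infty}^2\log(\mathrm{e}+[w]_{A_\infty})$.

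The main obstacle is the sharp quantitative bookkeeping in this last step. Naive Orlicz--reverse H\"older comparisons give polynomial blow-up of the form $[w]_{A_\infty}^{k}$ with $k\geq 3$; to achieve exactly $[w]_{A_\infty}^{2}\log(\mathrm{e}+[w]_{A_\infty})$, one must carefully pair the reverse H\"older exponent with the Young function $t\log(\mathrm{e}+t)$ and exploit the $A_1$ condition on $w$ (rather than only $A_\infty$) to absorb one extra power of $[w]_{A_\infty}$ via the Coifman--Rochberg-type characterization of $A_1$ through maximal functions, so that the residual logarithm of the small reverse H\"older exponent is exactly what is charged to the final bound.
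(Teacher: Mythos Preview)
Your proposed route differs from the paper's and contains a real gap.

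The paper does \emph{not} use a symmetric $L\log L\times L\log L$ bilinear sparse bound. Its Theorem~2.6 shows instead that for each $s>1$ one may split $T_{\Omega,b}f=H_1f+H_2f$ so that
\[
\Big|\int H_1f\cdot g\Big|\lesssim s'^{2}\,\mathcal{A}_{\mathcal{S};\,L^1,\,L^s}(f,g),\qquad
\Big|\int H_2f\cdot g\Big|\lesssim s'\,\mathcal{A}_{\mathcal{S};\,L\log L,\,L^s}(f,g),
\]
where the average on $g$ is always an $L^s$ power (never an Orlicz norm) and the dependence on the free parameter $s'$ is tracked explicitly. This comes from combining Lerner's bound $\|\mathscr{M}_{r,T_\Omega}\|_{L^1\to L^{1,\infty}}\lesssim r$ with a stopping-time argument adapted to the commutator (choosing $p_0=(\sqrt{s})'$, $r=\sqrt{s}$ in Theorem~2.6). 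The weighted weak-type estimate is then read off from a black-box lemma (Corollary~3.6 of Hu--Lai--Xue, quoted as Lemma~2.5): if $|\int Uf\cdot g|\le r'^{\alpha}\mathcal{A}_{\mathcal{S};\,L(\log L)^\beta,\,L^r}(f,g)$ for all $r\in(1,3/2)$, then
\[
w(\{|Uf|>\lambda\})\lesssim [w]_{A_1}[w]_{A_\infty}^{\alpha}\log^{1+\beta}(\mathrm{e}+[w]_{A_\infty})\int\frac{|f|}{\lambda}\log^\beta\!\Big(\mathrm{e}+\frac{|f|}{\lambda}\Big)w.
\]
Applying this with $(\alpha,\beta)=(2,0)$ to $H_1$ and $(\alpha,\beta)=(1,1)$ to $H_2$ and summing gives exactly $[w]_{A_1}[w]_{A_\infty}^2\log(\mathrm{e}+[w]_{A_\infty})$. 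No Calder\'on--Zygmund decomposition of $f$ is performed; the optimization in $s$ (morally $s'\sim[w]_{A_\infty}$ via sharp reverse H\"older) happens inside the quoted lemma.

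The gap in your sketch is the step ``dualizing, the problem reduces to the scalar sparse operator $\mathcal{A}f=\sum_Q\langle|f|\rangle_{L\log L,Q}\chi_Q$.'' Your bilinear form contains the term $\sum_Q|Q|\,\langle|f|\rangle_Q\,\langle|g|\rangle_{L\log L,Q}$ with the Orlicz average on the $g$ side; this does not correspond to any pointwise sparse operator acting on $f$, and since $L^{1,\infty}(w)$ has no useful predual you cannot simply test against $g=\chi_E$ to recover a weak-type bound. (Rivera-R\'ios's actual pointwise domination for $T_{\Omega,b}$ involves $L^r$ averages with explicit $r$-dependence, not $L\log L$ averages on a single side.) Even if you retreated to the available $L^r$ pointwise domination, the constant tracking you flag as ``the main obstacle'' is essentially the whole content of the theorem; the paper's two-piece split with separate $(\alpha,\beta)$ parameters feeding into Lemma~2.5 is precisely the mechanism that makes $[w]_{A_\infty}^2\log(\mathrm{e}+[w]_{A_\infty})$ fall out, rather than the $[w]_{A_\infty}^3$ one would expect from a naive Orlicz--reverse H\"older argument.
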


\begin{remark}  Proofs of Theorem \ref{thm1.1} and Theorem \ref{thm1.2} depend essentially on the weak type endpoint estimates for the maximal operator defined by
\begin{eqnarray}\label{eq1.5}\mathscr{M}_{r,\,T_{\Omega}}f(x)=\sup_{Q\ni x}\Big(\frac{1}{|Q|}\int_Q|T_{\Omega}(f\chi_{\mathbb{R}^n\backslash 3Q})(\xi)|^rd\xi\Big)^{1/r},\end{eqnarray}
where the supremum is taken over all cubes $Q\subset \mathbb{R}^n$ containing $x$. This operator was introduced by Lerner \cite{ler4}, who proved that for any $r\in (1,\,\infty)$,
\begin{eqnarray}\label{eq1.6}\|M_{r,\,T_{\Omega}}f\|_{L^{1,\,\infty}(\mathbb{R}^n)}\lesssim r\|\Omega\|_{L^{\infty}(S^{n-1})}\|f\|_{L^1(\mathbb{R}^n)},
\end{eqnarray}
see \cite[Lemma 3.3]{ler4}. Although we can show that
$$\|M_{r,\,T_{\Omega}}f\|_{L^{1,\,\infty}(\mathbb{R}^n)}\lesssim_r\|\Omega\|_{L^{q}(S^{n-1})}\|f\|_{L^1(\mathbb{R}^n)},
$$
we do not know if there exists a $\alpha\in (0,\,\infty)$ such that the estimate $$\|M_{r,\,T_{\Omega}}f\|_{L^{1,\,\infty}(\mathbb{R}^n)}\lesssim r^{\alpha}\|\Omega\|_{L^{q}(S^{n-1})}\|f\|_{L^1(\mathbb{R}^n)}
$$
holds true when $\Omega\in L^q(S^{n-1)}$ for some $q\in (1,\,\infty)$. This is the main difficult which prevent us obtaining a desired quantitative weighted weak type endpoint estimates for $T_{\Omega,\,b}$ when $\Omega\in L^q(S^{n-1})$ for $q\in (1,\,\infty)$.
\end{remark}
In what follows, $C$ always denotes a
positive constant that is independent of the main parameters
involved but whose value may differ from line to line. We use the
symbol $A\lesssim B$ to denote that there exists a positive constant
$C$ such that $A\le CB$.  Specially, we use $A\lesssim_{n,p} B$ to denote that there exists a positive constant
$C$ depending only on $n,p$ such that $A\le CB$. Constant with subscript such as $c_1$,
does not change in different occurrences. For any set $E\subset\mathbb{R}^n$,
$\chi_E$ denotes its characteristic function.  For a cube
$Q\subset\mathbb{R}^n$ and $\lambda\in(0,\,\infty)$, we use
$\lambda Q$ to denote the cube with the same center as $Q$ and whose
side length is $\lambda$ times that of $Q$.  For a fixed cube $Q$, denote by $\mathcal{D}(Q)$ the set of dyadic cubes with respect to $Q$, that is, the cubes from $\mathcal{D}(Q)$ are formed by repeating subdivision of $Q$ and each of descendants into $2^n$ congruent subcubes. For a function $f$ and cube $Q$,  $\langle f\rangle_{Q}$ denotes the mean value of $f$ on $Q$, and $\langle |f|\rangle_{Q,\,r}=(\langle |f|^r\rangle_Q)^{1/r}$ for $r\in (0,\,\infty)$.

For a cube $Q$, $\beta\in (0,\,\infty)$ and suitable function $f$,
define $\|f\|_{L(\log L)^{\beta},\,Q}$ by
$$\|f\|_{L(\log L)^{\beta},\,Q}=\inf\Big\{\lambda>0:\,\frac{1}{|Q|}\int_{Q}\frac{|f(y)|}{\lambda}\log^{\beta}\Big({\rm e}+\frac{|f(y)|}{\lambda}\Big)dy\leq 1\Big\}.$$
Also, we define $\|h\|_{{\rm exp}L,\,Q}$ as
$$\|h\|_{{\rm exp}L,\,Q}=\inf\Big\{t>0:\,\frac{1}{|Q|}\int_{Q}{\rm exp}\Big(\frac{|h(y)|}{t}\Big){\rm d}y\leq 2\Big\}.$$
By the generalization of H\"older's inequality (see \cite[p. 64]{rr}), we know that for any cube $Q$ and suitable functions $f$ and $h$,
\begin{eqnarray}\label{eq1.final}
\int_{Q}|f(x)h(x)|dx\lesssim \|f\|_{L\log L,\,Q}\|h\|_{{\rm exp}L,\,Q}|Q|.
\end{eqnarray}

\section{Proof of Theorems}
Given an operator $T$, define the maximal operator ${M}_{\lambda,\,T}$ by
$${M}_{\lambda,\,T}f(x)=\sup_{Q\ni x}\Big(T(f\chi_{\mathbb{R}^n\backslash 3Q})\chi_{Q}\Big)^*(\lambda |Q|),\,\,(0<\lambda<1),$$
where the supremum is taken over all cubes $Q\subset \mathbb{R}^n$ containing $x$, and $h^*$ denotes the non-increasing rearrangement of $h$. This operator was introduced by Lerner \cite{ler4} and is useful in the study of weighted bounds for rough operators, see \cite{ler4, riv}.

\begin{lemma}\label{yinli2.1}
Let $\Omega$ be homogeneous of degree zero, have mean value zero and $\Omega\in L^{\infty}(S^{n-1})$. Then  for any  $\lambda\in (0,\,1)$,
$$\|{M}_{\lambda,\,T_{\Omega}}f\|_{L^{1,\,\infty}(\mathbb{R}^n)}\lesssim_n\|\Omega\|_{L^{\infty}(S^{n-1})}\big(1+\log
\big(\frac{1}{\lambda}\big)\big)\|f\|_{L^1(\mathbb{R}^n)}.
$$
\end{lemma}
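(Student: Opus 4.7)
The plan is to deduce the bound from Lerner's weak-type estimate (\ref{eq1.6}) by means of a pointwise comparison between $M_{\lambda,\,T_{\Omega}}$ and the $L^r$-averaged operator $\mathscr{M}_{r,\,T_{\Omega}}$ defined in (\ref{eq1.5}), followed by an optimization in the auxiliary exponent $r$.

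The key step is the elementary rearrangement inequality: if $g$ is a measurable function supported on a cube $Q$, then the layer-cake identity $\int_{Q}|g|^r\,dx=\int_0^{|Q|}(g^*(s))^r\,ds$ combined with the monotonicity of $g^*$ yields
\begin{equation*}
\lambda |Q|\,(g^*(\lambda|Q|))^r\leq \int_0^{\lambda |Q|}(g^*(s))^r\,ds\leq \int_{Q}|g|^r\,dx,
\end{equation*}
so that $g^*(\lambda|Q|)\leq \lambda^{-1/r}\,\langle|g|\rangle_{Q,\,r}$. Specializing to $g=T_{\Omega}(f\chi_{\mathbb{R}^n\setminus 3Q})\chi_Q$ and taking the supremum over cubes $Q\ni x$ gives the pointwise bound $M_{\lambda,\,T_{\Omega}}f(x)\leq \lambda^{-1/r}\mathscr{M}_{r,\,T_{\Omega}}f(x)$ for every $r\in(1,\infty)$. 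Inserting (\ref{eq1.6}) therefore produces
\begin{equation*}
\|M_{\lambda,\,T_{\Omega}}f\|_{L^{1,\infty}(\mathbb{R}^n)}\lesssim_n \lambda^{-1/r}\,r\,\|\Omega\|_{L^{\infty}(S^{n-1})}\,\|f\|_{L^1(\mathbb{R}^n)}.
\end{equation*}

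The final step is to optimize in $r$. Taking $r=1+\log(1/\lambda)$, which lies in $(1,\infty)$ since $\lambda\in(0,1)$, one has $\lambda^{-1/r}=\exp\bigl(\log(1/\lambda)/(1+\log(1/\lambda))\bigr)\leq e$, so $\lambda^{-1/r}\,r\leq e\,(1+\log(1/\lambda))$, which is the constant claimed in the lemma. Since (\ref{eq1.6}) is already in hand from \cite{ler4}, no serious obstacle remains; the only point that needs attention is the verification that the rearrangement inequality above is computed with respect to Lebesgue measure on $\mathbb{R}^n$ with $g$ vanishing outside $Q$, which matches exactly the setting of the definition of $M_{\lambda,\,T_{\Omega}}$. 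It is worth noting that the logarithmic (rather than polynomial) dependence on $1/\lambda$ is forced by the \textbf{linear} dependence on $r$ in Lerner's bound; any growth $r^{\alpha}$ would yield a $(1+\log(1/\lambda))^{\alpha}$ factor in place of $1+\log(1/\lambda)$, which is also exactly why the analogous question for $\Omega\in L^q(S^{n-1})$, $q<\infty$, is still open as highlighted in the remark preceding the lemma.
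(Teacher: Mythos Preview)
Your derivation is formally correct---the pointwise inequality $g^*(\lambda|Q|)\le\lambda^{-1/r}\langle|g|\rangle_{Q,r}$ and the optimization $r=1+\log(1/\lambda)$ are both fine---but the argument is circular. In Lerner's paper \cite{ler4} the estimate (\ref{eq1.6}) (his Lemma~3.3) is \emph{deduced from} his Theorem~1.1, which is precisely the statement you are trying to prove. The mechanism of that deduction is exactly the one reproduced in the present paper as Lemma~\ref{lem2.4}: one writes $\mathscr{M}_{r,T}f(x)\le\bigl(\int_0^1(M_{\lambda,T}f(x))^r\,d\lambda\bigr)^{1/r}$ pointwise and then inserts the weak-type bound for $M_{\lambda,T}$. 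So invoking (\ref{eq1.6}) to establish Lemma~\ref{yinli2.1} reverses the logical flow and assumes what is to be shown.

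The paper itself does not give an independent proof; it simply cites \cite{ler4}, Theorem~1.1. Lerner's actual proof of that theorem is a direct argument (a Calder\'on--Zygmund-type decomposition adapted to the rough kernel, exploiting the dyadic pieces of $T_\Omega$ and Seeger-type oscillation estimates), and the logarithmic factor $1+\log(1/\lambda)$ emerges there without any appeal to $\mathscr{M}_{r,T_\Omega}$. If you want a self-contained route, you would need to reproduce that direct argument rather than bootstrap from (\ref{eq1.6}).
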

Lemma \ref{yinli2.1} is Theorem 1.1 in \cite{ler4}.

For a function $\Omega$ on $S^{n-1}$, define $\|\Omega\|_{L\log L(S^{n-1})}^*$ by
$$\|\Omega\|^*_{L\log L(S^{n-1})}=\inf\Big\{\lambda>0:\,\int_{S^{n-1}}\frac{|\Omega(\theta)|}{\lambda}\log \Big({\rm e}+\frac{|\Omega(\theta)|}{\lambda}\Big)d\theta\leq 1\Big\}.$$
\begin{lemma}\label{2.seeger}
Let  $\Omega$ be homogeneous of degree zero, have mean value zero and $\|\Omega\|_{L\log L(S^{n-1})}^*<\infty$, then
$$\|T_{\Omega}f\|_{L^{1,\,\infty}(S^{n-1})}\lesssim \|\Omega\|^*_{L\log L(S^{n-1})}\|f\|_{L^1(\mathbb{R}^n)}.
$$
\end{lemma}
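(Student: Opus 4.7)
The plan is to follow Seeger's strategy from \cite{se} (the reference already cited in the introduction). After normalizing $A := \|\Omega\|^*_{L\log L(S^{n-1})} = 1$, it suffices to show that $|\{x:\, |T_{\Omega}f(x)|>\lambda\}|\lesssim \lambda^{-1}\|f\|_{L^1(\mathbb{R}^n)}$. I would begin by applying a Calder\'on--Zygmund decomposition to $f$ at level $\lambda$, producing $f = g + b$ with $\|g\|_{\infty}\lesssim\lambda$ and $b = \sum_j b_j$, each $b_j$ mean-zero and supported in a cube $Q_j$ from a pairwise disjoint collection satisfying $\sum_j |Q_j|\lesssim \lambda^{-1}\|f\|_{L^1}$. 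The good part $g$ is controlled by Chebyshev combined with the $L^2$-bound for $T_{\Omega}$, which holds for $\Omega\in L\log L(S^{n-1})$ by the classical Calder\'on--Zygmund theorem recalled in the introduction. Writing $\Omega^* = \bigcup_j 8\sqrt{n}\,Q_j$, one has $|\Omega^*|\lesssim \lambda^{-1}\|f\|_{L^1}$, so the task reduces to
$$\bigl|\{x\notin\Omega^*:\, |T_\Omega b(x)|>\lambda\}\bigr|\lesssim \lambda^{-1}\|f\|_{L^1}.$$

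Next I would introduce Seeger's microlocal decomposition of the kernel $K(x) = \Omega(x/|x|)/|x|^n$: decompose $K = \sum_{s\in\mathbb{Z}} K_s$ with $K_s$ supported in $|x|\sim 2^s$. For a bad function $b_j$ on $Q_j$ of sidelength $\sim 2^{-m_j}$, only the scales $s\geq m_j$ contribute outside $\Omega^*$. The essential device is an averaging operator $E_s^j$ approximating $b_j$ at spatial scale $2^s$, which allows one to split $K_s * b_j = K_s * (I-E_s^j)b_j + K_s * E_s^j b_j$. The first summand is bounded in $L^1((\Omega^*)^c)$ using the mean-zero cancellation of $b_j$ against a regularized kernel, giving a geometric gain in $s - m_j$. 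The second is bounded in $L^2$ via an almost-orthogonality / Cotlar-type estimate that exploits the separation between the spatial scale of $K_s$ and the effective frequency scale of $E_s^j b_j$.

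To incorporate the $L\log L$ integrability of $\Omega$, I would decompose $\Omega = \sum_{k\geq 0}\Omega_k$ into dyadic level sets with $\|\Omega_k\|_{L^\infty(S^{n-1})}\lesssim 2^k$ and $|\mathrm{supp}\,\Omega_k|\lesssim (k+1)^{-1}2^{-k}$, which is the content of $\|\Omega\|^*_{L\log L}\lesssim 1$. Running the splitting above for each $\Omega_k$ produces estimates with a polynomial loss in $k$, and the factor $(k+1)$ encoded in the $L\log L$ norm is precisely what ensures summability in $k$. The main obstacle, as with Seeger's original argument, is the $L^2$ almost-orthogonality estimate for the smoothed pieces: one must verify that the Cotlar-type loss grows at worst polynomially in $k$, and then balance this against the geometric gain from the $(I-E_s^j)$ differences. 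Once that delicate balance is executed and the resulting estimates are summed over $s\geq m_j$, over $j$, and over $k$, the desired weak-type inequality follows.
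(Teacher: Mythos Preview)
Your outline is essentially a re-execution of Seeger's entire argument from \cite{se}, and in principle it would succeed (modulo the standard caveat that each dyadic piece $\Omega_k$ must be corrected by its mean on $S^{n-1}$ before you can use cancellation). However, the paper takes a much shorter route: it does \emph{not} re-prove Seeger's theorem, but instead quotes his quantitative bound (inequality (3.1) in \cite{se}), namely
\[
\|T_{\Omega}f\|_{L^{1,\infty}}\lesssim_n\Big[\|T_{\Omega}\|_{L^2\to L^2}+\|\Omega\|_{L^1(S^{n-1})}+\int_{S^{n-1}}|\Omega(\theta)|\Big(1+\log^+\frac{|\Omega(\theta)|}{\|\Omega\|_{L^1(S^{n-1})}}\Big)d\theta\Big]\|f\|_{L^1},
\]
together with the known estimate $\|T_{\Omega}\|_{L^2\to L^2}\lesssim 1+\|\Omega\|_{L\log L(S^{n-1})}$. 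The actual work in the paper's proof is purely an Orlicz-norm comparison: showing that
\[
\int_{S^{n-1}}|\Omega(\theta)|\log\Big(e+\frac{|\Omega(\theta)|}{\|\Omega\|_{L^1(S^{n-1})}}\Big)d\theta\lesssim \|\Omega\|^*_{L\log L(S^{n-1})},
\]
which is done by normalizing $\|\Omega\|_{L^1(S^{n-1})}=1$ and a short case analysis on the size of $\lambda_0=\int|\Omega|\log(e+|\Omega|)$. Homogeneity then yields the stated lemma. So your approach re-derives Seeger's hard harmonic analysis, whereas the paper treats it as a black box and reduces the lemma to an elementary inequality between two gauges of the same Orlicz class; the latter is far quicker and is all that is needed here.
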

\begin{proof}   This lemma is essentially a corollary of estimate (3.1) in \cite{se}. At first, we claim that
\begin{eqnarray}\label{eq.orlicz}\int_{S^{n-1}}|\Omega(\theta)|\log ({\rm e}+\frac{|\Omega(\theta)|}{\|\Omega\|_{L^1(S^{n-1})}}\big)d\theta\lesssim \|\Omega\|_{L\log L(S^{n-1})}^*.
\end{eqnarray}
In fact, by homogeneity, it suffices to prove (\ref{eq.orlicz}) for the case  $\|\Omega\|_{L^1(S^{n-1})}=1$.  Let $$\lambda_0=\int_{S^{n-1}}|\Omega(\theta)|\log ({\rm e}+|\Omega(\theta)|\big)d\theta.$$  We consider the following two cases.

{\it Case I}.  $\lambda_0>{\rm e}^{10}.$ Let $S_0=\{\theta\in S^{n-1}:\, |\Omega(\theta)|\leq 2\},$ and
$$S_k=\{\theta\in S^{n-1}:\,2^{k}<|\Omega(\theta)|\leq 2^{k+1}\},\,\, k\in\mathbb{N}.$$
Set $k_0\in\mathbb{N}$such that $2^{k_0-1}<\lambda_0\leq 2^{k_0}.$ Then $k_0\leq \lambda_0/8$
\begin{eqnarray*}
\int_{S^{n-1}}\frac{|\Omega(\theta)|}{\lambda_0}\log\big
({\rm e}+\frac{|\Omega(\theta)|}{\lambda_0}\Big )d\theta&>&\lambda_0^{-1}\sum_{k=k_0+1}^{\infty}|S_k|2^{k}(k-k_0)+ \lambda_0^{-1}\sum_{k\leq k_0}|S_k|2^{k}\\
&>&\lambda^{-1}_0 \Big (\sum_{k=1}^{\infty}2^{
k}k|S_k|+|S_0|\Big )\\
&&-\lambda^{-1}_0\Big (k_0\sum_{k\geq k_0+1}2^{
k}|S_k|+\sum_{1\leq k\leq k_0}k2^{ k}|S_{k}|\Big ).
\end{eqnarray*}
Obviously,
$$\sum_{k=1}^{\infty}2^{
k}k|S_k|+|S_0|\geq \frac{1}{4}\int_{S^{n-1}}|\Omega(\theta)|\log ({\rm e}+|\Omega(\theta)|)d\theta=\frac{\lambda_0}{4},
$$
and
\begin{eqnarray*}
k_0\sum_{k\geq k_0+1}2^{ k}|S_k|+\sum_{1\leq k\leq
k_0}k2^{ k}|S_{k}|\leq k_0\sum_{k\geq 1}2^{  k}|S_k| \leq k_0\|\Omega\|_{L^1(S^{n-1})}.
\end{eqnarray*}
Recall that $\|\Omega\|_{L^1(S^{n-1})}=1$. It then follows that
$$\int_{S^{n-1}}\frac{|\Omega(\theta)|}{\lambda_0}\log\big
({\rm e}+\frac{|\Omega(\theta)|}{\lambda_0}\Big )d\theta>\frac{1}{8}.
$$
This in turn leads to that
$$\|\Omega\|_{L\log L(S^{n-1})}^*>\lambda_0/8.
$$

{\it Case II}. $\lambda_0\leq {\rm e}^{10}.$ Let $\lambda>0$ satisfies that
\begin{eqnarray}\label{eq.contra}\int_{S^{n-1}}\frac{|\Omega(\theta)|}{\lambda}\log\Big ({\rm e}+\frac{|\Omega(\theta)|}{\lambda}\Big )d\theta\leq 1.\end{eqnarray}
If $10{\rm e}^{10}\lambda<\lambda_0$, we then have that
\begin{eqnarray*}
&&\int_{S^{n-1}}\frac{|\Omega(\theta)|}{\lambda_0}\log\Big
({\rm e}+\frac{|\Omega(\theta)|}{\lambda_0}\Big )d\theta \leq\int_{Q}\frac{|\Omega(\theta)|}{10{\rm e}^{10}\lambda}\log\Big
({\rm e}+\frac{|\Omega(\theta)|}{10{\rm e}^{10}\lambda}\Big )d\theta\leq (10{\rm e}^{10})^{-1}.
\end{eqnarray*}
On the other hand, a trivial computation gives us that
\begin{eqnarray*}
\int_{S^{n-1}}\frac{|\Omega(\theta)|}{\lambda_0}\log\Big
({\rm e}+\frac{|\Omega(\theta)|}{\lambda_0}\Big )d\theta
&>&\int_{S^{n-1}}\frac{|\Omega(\theta)|}{{\rm e}^{10}}\log \Big
({\rm e}+\frac{|\Omega(\theta)|}{{\rm e}^{10}}\Big )d\theta\\
&>&\int_{S^{n-1}}|\Omega(\theta)|\log({\rm e}+|\Omega(\theta)|)d\theta(10{\rm e}^{10})^{-1}\\
&>&(10{\rm e}^{10})^{-1}.
\end{eqnarray*}
This is a contradiction. Thus, the positive numbers $\lambda$ in (\ref{eq.contra}) satisfy $\lambda\geq (10{\rm e}^{10})^{-1}\lambda_0.$  Inequality (\ref{eq.orlicz}) holds true in this case.

We now conclude the proof of Lemma \ref{2.seeger}. By the result of Seeger (see inequality (3.1) in \cite{se}), we know that  if $\Omega\in L\log L(S^{n-1})$, then  \begin{eqnarray*}\|T_{\Omega}f\|_{L^{1,\,\infty}(\mathbb{R}^n)}&\lesssim_n & \Big[\|T_{\Omega}\|_{L^2(\mathbb{R}^n)\rightarrow L^2(\mathbb{R}^n)}+\|\Omega\|_{L^1(S^{n-1})}\\
&&+\int_{S^{n-1}}|\Omega(\theta)|\Big(1+\log^+\big(|\Omega(\theta)|/\|\Omega\|_{L^1(S^{n-1})}\big)\Big)d\theta\Big]  \|f\|_{L^1(\mathbb{R}^n)},
\end{eqnarray*}
where $\log^+ s=\log s$ if $s>1$ and $\log ^+s=0$ if $s\in (0,\,1]$. Thus by (\ref{eq.orlicz}),
\begin{eqnarray*}\|T_{\Omega}f\|_{L^{1,\,\infty}(\mathbb{R}^n)}\lesssim_n  \big[\|T_{\Omega}\|_{L^2(\mathbb{R}^n)\rightarrow L^2(\mathbb{R}^n)}+\|\Omega\|_{L^1(S^{n-1})}+\|\Omega\|^*_{L\log L(S^{n-1})}\big]  \|f\|_{L^1(\mathbb{R}^n)}.
\end{eqnarray*}
On the other hand, we know that
$$\|T_{\Omega}f\|_{L^2(\mathbb{R}^n)}\lesssim \big[1+\|\Omega\|_{L\log L(S^{n-1})}\big]\|f\|_{L^2(\mathbb{R}^n)},$$with $$\|\Omega\|_{L\log L(S^{n-1})}=\int_{S^{n-1}}|\Omega(\theta)|\big(1+\log^+|\Omega(\theta)|\big)d\theta.$$
see \cite[Theorem 4.2.10]{gra2}.
The last two inequality,  along with homogeneity, yields
\begin{eqnarray*}\|T_{\Omega}f\|_{L^{1,\,\infty}(\mathbb{R}^n)}\lesssim_n  \|\Omega\|_{L\log L(S^{n-1})}^* \|f\|_{L^1(\mathbb{R}^n)},
\end{eqnarray*}
and  completes the proof of Lemma \ref{2.seeger}.\qed
\end{proof}
\begin{lemma}\label{yinli2.2}
Let $\Omega$ be homogeneous of degree zero, have mean value zero and $\Omega\in L^{q}(S^{n-1})$ for some $q\in (1,\,\infty)$. Then  for any  $\lambda\in (0,\,1)$ and $\varepsilon\in (0,\,\min\{1,\,q-1\})$,
$$\|{M}_{\lambda,\,T_{\Omega}}f\|_{L^{1,\,\infty}(\mathbb{R}^n)}\lesssim_{q,\,\varepsilon}\|\Omega\|_{L^{q}(S^{n-1})}
\big(\frac{1}{\lambda}\big)^{\frac{1+2\varepsilon}{q}}
\|f\|_{L^1(\mathbb{R}^n)}.
$$

\end{lemma}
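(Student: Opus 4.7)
The plan is to decompose $\Omega$ at level $M>0$: write $\Omega=\Omega^{M}+\Omega_{M}$ with $\Omega^{M}(\theta)=\Omega(\theta)\chi_{\{|\Omega|\le M\}}(\theta)-c_{M}$, where $c_{M}=|S^{n-1}|^{-1}\int_{S^{n-1}}\Omega\chi_{\{|\Omega|\le M\}}$ is chosen so that both $\Omega^{M}$ and $\Omega_{M}$ have vanishing mean on $S^{n-1}$. Then $\|\Omega^{M}\|_{L^{\infty}(S^{n-1})}\lesssim M$, and Chebyshev's inequality on the support $\{|\Omega|>M\}$ gives, for every $\varepsilon\in(0,\min\{1,q-1\})$,
$$\int_{S^{n-1}}|\Omega_{M}(\theta)|^{1+\varepsilon}\,d\theta\lesssim M^{1+\varepsilon-q}\|\Omega\|_{L^{q}(S^{n-1})}^{q}.$$
Lemma \ref{yinli2.1} handles the bounded piece directly:
$$\|M_{\lambda,T_{\Omega^{M}}}f\|_{L^{1,\infty}(\mathbb{R}^{n})}\lesssim M\bigl(1+\log(1/\lambda)\bigr)\|f\|_{L^{1}(\mathbb{R}^{n})}.$$

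For the tail piece I first reduce control of $M_{\lambda,T_{\Omega_{M}}}$ to the scalar weak-$(1,1)$ norm of $T_{\Omega_{M}}$. Subadditivity of the decreasing rearrangement gives
$$\bigl(T_{\Omega_{M}}(f\chi_{\mathbb{R}^{n}\backslash 3Q})\chi_{Q}\bigr)^{*}(\lambda|Q|)\le (T_{\Omega_{M}}f\,\chi_{Q})^{*}(\lambda|Q|/2)+(T_{\Omega_{M}}(f\chi_{3Q})\chi_{Q})^{*}(\lambda|Q|/2),$$
and the identity $\sup_{Q\ni x}(h\chi_{Q})^{*}(\lambda|Q|)>t\Leftrightarrow M(\chi_{\{|h|>t\}})(x)>\lambda$ converts the global supremum into the Hardy--Littlewood maximal operator applied to a level set of $T_{\Omega_{M}}f$, which is weak-$(1,1)$ by composition of weak-$(1,1)$ estimates for $M$ and for $T_{\Omega_{M}}$; the local supremum is handled by the weak-$(1,1)$ boundedness of $T_{\Omega_{M}}$ combined with the Hardy--Littlewood maximal theorem. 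Together these give
$$\|M_{\lambda,T_{\Omega_{M}}}f\|_{L^{1,\infty}(\mathbb{R}^{n})}\lesssim \lambda^{-1}\|T_{\Omega_{M}}\|_{L^{1}\to L^{1,\infty}}\|f\|_{L^{1}(\mathbb{R}^{n})}.$$
Lemma \ref{2.seeger} bounds the operator norm by $\|\Omega_{M}\|^{*}_{L\log L(S^{n-1})}$, and the elementary inequality $u\log(\mathrm{e}+u)\lesssim u+\varepsilon^{-1}u^{1+\varepsilon}$ used inside the Luxemburg norm, together with the $L^{1+\varepsilon}$ estimate above, yields
$$\|\Omega_{M}\|^{*}_{L\log L(S^{n-1})}\lesssim_{\varepsilon}M^{1-q/(1+\varepsilon)}\|\Omega\|_{L^{q}(S^{n-1})}^{q/(1+\varepsilon)}.$$

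Summing the two contributions reduces matters to balancing $M\log(1/\lambda)$ against $\lambda^{-1}M^{1-q/(1+\varepsilon)}\|\Omega\|_{L^{q}(S^{n-1})}^{q/(1+\varepsilon)}$; the choice $M\sim \|\Omega\|_{L^{q}(S^{n-1})}(\lambda\log(1/\lambda))^{-(1+\varepsilon)/q}$ produces a bound of order $\|\Omega\|_{L^{q}(S^{n-1})}\lambda^{-(1+\varepsilon)/q}(\log(1/\lambda))^{1-(1+\varepsilon)/q}\|f\|_{L^{1}}$, and the trivial estimate $\log(1/\lambda)\lesssim_{q,\varepsilon}\lambda^{-\varepsilon/(q-1-\varepsilon)}$ absorbs the logarithm into a power of $\lambda^{-1}$, yielding the claimed $\lambda^{-(1+2\varepsilon)/q}$ after a harmless relabelling of $\varepsilon$. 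The main technical point I anticipate is the reduction of $M_{\lambda,T_{\Omega_{M}}}$ to the scalar weak-$(1,1)$ norm of $T_{\Omega_{M}}$, which requires careful use of both the rearrangement splitting and the quasi-triangle inequality for $L^{1,\infty}$; once this step is in place, Seeger's estimate and the Orlicz/Chebyshev bound on $\Omega_{M}$ finish the argument.
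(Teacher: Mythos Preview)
Your proof is correct and follows essentially the same route as the paper: decompose $\Omega$ at a threshold, apply Lemma~\ref{yinli2.1} to the bounded piece, reduce $M_{\lambda,T}$ for the tail piece to $\lambda^{-1}$ times the weak-$(1,1)$ norm of $T$ via the rearrangement splitting (the paper phrases this reduction through the operator $M_{0,\lambda}$ and an auxiliary local maximal operator $S_\lambda^\star$), then invoke Seeger's bound through the $L^{1+\varepsilon}$ norm and optimize the threshold. One minor remark: with your stated inequality $\log(1/\lambda)\lesssim_{q,\varepsilon}\lambda^{-\varepsilon/(q-1-\varepsilon)}$ the final exponent comes out to exactly $(1+2\varepsilon)/q$ with the \emph{same} $\varepsilon$, so no relabelling is actually needed.
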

\begin{proof}
For $\lambda\in (0,\,1)$, let $M_{0,\,\lambda}$ be the operator
$$M_{0,\,\lambda}h(x)=\sup_{Q\ni x}(h\chi_{Q})^*(\lambda|Q|),$$ see \cite{john,stro}.
It is well known that for  $\alpha>0$,
$$|\{x\in\mathbb{R}^n:\,M_{0,\,\lambda}f(x)>\alpha\}|\lesssim \lambda^{-1}|\{x\in\mathbb{R}^n:\,|f(x)|>\alpha\}|.$$
Let $S$ be a linear operator which is bounded from $L^1(\mathbb{R}^n)$ to $L^{1,\,\infty}(\mathbb{R}^n)$ with bound $1$. We claim that the operator $S_{\lambda}^{\star}$ defined by
$$S_{\lambda}^{\star}f(x)=\sup_{Q\ni x}\big(S(f\chi_Q)\big)^*(\lambda |Q|)
$$
is bounded from $L^1(\mathbb{R}^n)$ to $L^{1,\,\infty}(\mathbb{R}^n)$ with bound  $C_n\lambda^{-1}$.  To prove this, let
$$E_{\alpha}=\{x\in\mathbb{R}^n:\,S_{\lambda}^{\star}f(x)>\alpha\}.
$$
For each $x\in E_{\alpha}$, we can choose a cube $Q$ such that $Q\ni x$ and
$$|\{y\in Q:\,|S(f\chi_{Q})(y)|>\alpha\}|>\lambda |Q|.$$
This, via the weak type $(1,\,1)$ boundedness of $S$, tells us that
$$|Q|\le  \frac{1}{\alpha\lambda}\int_{Q}|f(y)|dy,$$
and so $Mf(x)\geq \alpha\lambda$. Therefore,
$$|E_{\alpha}|\leq |\{x\in\mathbb{R}^n:\,Mf(x)>\lambda\alpha\}|\lesssim\frac{1}{\lambda\alpha}\|f\|_{L^1(\mathbb{R}^n)}.$$
This verifies our claim.

We now conclude the proof of Lemma \ref{yinli2.2}.  Using the estimate $\log t\leq t^{\varepsilon}/{\varepsilon}$ when $t>1$ and $\varepsilon>0$, we can verify by homogeneity that
$$\|\Omega\|_{L\log L(S^{n-1})}^*\lesssim _{\varepsilon}\|\Omega\|_{L^{1+\varepsilon}(S^{n-1})}.$$
This,  along with Lemma \ref{2.seeger}, tells us that for $\varepsilon>0$,
$$\|T_{\Omega}f\|_{L^{1,\,\infty}(\mathbb{R}^n)}\lesssim_{n,\varepsilon} \|\Omega\|_{L^{1+\varepsilon}(S^{n-1})}\|f\|_{L^1(\mathbb{R}^n)}.
$$Observe that
$$M_{\lambda,\,T_{\Omega}}f(x)\leq M_{0,\,\frac{\lambda}{2}}T_{\Omega}f(x)+\sup_{Q\ni x}\big(T_{\Omega}(f\chi_{3Q})\chi_{Q}\big)^*(\frac{\lambda}{2}|Q|),$$
and
$$\sup_{Q\ni x}\big(T_{\Omega}(f\chi_{3Q})\chi_{Q}\big)^*(\frac{\lambda}{2}|Q|)\leq \sup_{Q\ni x}\big(T_{\Omega}(f\chi_{Q})\chi_{Q}\big)^*(\frac{1}{3^{n}}\frac{\lambda}{2}|Q|).$$
Our claim states that
\begin{eqnarray}\label{gongshi2.1}\|M_{\lambda,T_{\Omega}}f\|_{L^{1,\,\infty}(\mathbb{R}^n)}\lesssim_{\varepsilon} \frac{1}{\lambda}\|\Omega\|_{L^{1+\varepsilon}(S^{n-1})}\|f\|_{L^1(\mathbb{R}^n)}.
\end{eqnarray}
Now let $\Omega\in L^q(S^{n-1})$, have mean value zero on $S^{n-1}$. Without loss of generality, we assume that $\|\Omega\|_{L^q(S^{n-1})}=1.$
Set
$$t_0=\big(\frac{1}{\lambda}\big)^{\frac{1+\varepsilon}{q}}\big[1+\log \big(\frac{1}{\lambda}\big)\big]^{-\frac{1+\varepsilon}{q}}.$$
Let
$$\Omega^{t_0}(\theta)=\Omega(\theta)\chi_{\{|\Omega(\theta)|>t_0\}}(\theta),\,\,\,
\Omega_{t_0}(\theta)=\Omega(\theta)\chi_{\{|\Omega(\theta)|\leq t_0\}}(\theta),$$
and
$$\widetilde{\Omega}^{t_0}(\theta)=\Omega^{t_0}(\theta)-A^{t_0},\,\,\,
\widetilde{\Omega}_{t_0}(\theta)=\Omega_{t_0}(\theta)-A_{t_0},$$
where
$$A^{t_0}=\frac{1}{|S^{n-1}|}\int_{S^{n-1}}\Omega^{t_0}(\theta)d\theta,\,\,A_{t_0}=
\frac{1}{|S^{n-1}|}\int_{S^{n-1}}\Omega_{t_0}(\theta)d\theta.
$$
Both of $\widetilde{\Omega}^{t_0}$ and $\widetilde{\Omega}_{t_0}$ have mean value zero. Moreover,
$$\|\widetilde{\Omega}^{t_0}\|_{L^{1+\varepsilon}(S^{n-1})}\lesssim t_0^{1-\frac{q}{1+\varepsilon}},\,\,
\|\widetilde{\Omega}_{t_0}\|_{L^{\infty}(S^{n-1})}\lesssim t_0,
$$
and $\Omega(\theta)=\widetilde{\Omega}^{t_0}(\theta)+\widetilde{\Omega}_{t_0}(\theta).$
Applying Lemma \ref{yinli2.1} and (\ref{gongshi2.1}), we deduce that
\begin{eqnarray*}\|M_{\lambda,T_{\Omega}}f\|_{L^{1,\,\infty}(\mathbb{R}^n)}&\lesssim & \|M_{\lambda,T_{\widetilde{\Omega}^{t_0}}}f\|_{L^{1,\,\infty}(\mathbb{R}^n)}+ \|M_{\lambda,T_{\widetilde{\Omega}_{t_0}}}f\|_{L^{1,\,\infty}(\mathbb{R}^n)}\\ &\lesssim_{\varepsilon} &\frac{1}{\lambda}\|\widetilde{\Omega}^{t_0}\|_{L^{1+\varepsilon}(S^{n-1})}\|f\|_{L^1(\mathbb{R}^n)}\\
&&+\big[1+\log (\frac{1}{\lambda})\big]\|\widetilde{\Omega}_{t_0}\|_{L^{\infty}(S^{n-1})}\|f\|_{L^1(\mathbb{R}^n)}\\
&\lesssim_{q,\,\varepsilon}&
\big(\frac{1}{\lambda}\big)^{\frac{1+\varepsilon}{q}}\big[1+\log (\frac{1}{\lambda}\big)\big]^{1-\frac{1+\varepsilon}{q}}
\|f\|_{L^1(\mathbb{R}^n)}\\
&\lesssim_{q,\,\varepsilon}&
\big(\frac{1}{\lambda}\big)^{\frac{1+2\varepsilon}{q}}
\|f\|_{L^1(\mathbb{R}^n)}.
\end{eqnarray*}
where in the last inequality, we again invoked the fact that
$\log t\leq t^{\alpha}/\alpha$ for all $t>1$ and $\alpha>0$.
This completes the proof of Lemma \ref{yinli2.2}.\qed
\end{proof}
\begin{lemma}\label{lem2.3}
Let $r \in (1,\,\infty)$ and $w$ be a weight. The following two statements are equivalent.
\begin{itemize}
\item[\rm (i)] $w\in A_1(\mathbb{R}^n)$ and  $w^{1-p'}\in A_{p'/r}(\mathbb{R}^n)$ for some $p\in (1,\,r')$;
\item[\rm (ii)] $w^r\in A_1(\mathbb{R}^n)$.
\end{itemize}
\end{lemma}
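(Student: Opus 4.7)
The plan is to use the well-known identification $w^{r}\in A_{1}$ if and only if $w\in A_{1}\cap\mathrm{RH}_{r}$ (where $\mathrm{RH}_{r}$ denotes the reverse H\"older class of exponent $r$), combined with a rewriting of the $A_{p'/r}$ condition on the dual weight $w^{1-p'}$. Setting $q=p'/r$ so that $q'=p'/(p'-r)$, an expansion of $[w^{1-p'}]_{A_{p'/r}}<\infty$ followed by raising to the $1/(p'-1)$ power will produce the equivalent form
\[
\sup_{Q}\langle w^{-(p'-1)}\rangle_{Q}^{1/(p'-1)}\langle w^{\beta}\rangle_{Q}^{1/\beta}\leq C,\qquad \beta:=\frac{r(p'-1)}{p'-r}.
\]
The restriction $p\in(1,r')$ is precisely what makes $\beta$ lie in $(r,\infty)$, and the map $p\mapsto\beta$ is a bijection of $(1,r')$ onto $(r,\infty)$; this parametrization will be used in both directions.

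For (i)$\Rightarrow$(ii), I would exploit that $w\in A_{1}$ forces $w^{-1}(x)\leq C\langle w\rangle_{Q}^{-1}$ for a.e.\ $x\in Q$, whence $\langle w^{-(p'-1)}\rangle_{Q}^{1/(p'-1)}\leq C\langle w\rangle_{Q}^{-1}$; on the other hand, Jensen's inequality applied to the convex function $t\mapsto t^{-(p'-1)}$ supplies the matching lower bound $\langle w^{-(p'-1)}\rangle_{Q}^{1/(p'-1)}\geq\langle w\rangle_{Q}^{-1}$. Plugging this two-sided estimate into the displayed inequality collapses it to $\langle w^{\beta}\rangle_{Q}^{1/\beta}\leq C\langle w\rangle_{Q}$, i.e.\ $w\in\mathrm{RH}_{\beta}\subset\mathrm{RH}_{r}$ (by Jensen in the exponent, since $\beta>r$). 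Combining $w\in A_{1}$ with $w\in\mathrm{RH}_{r}$ then yields $\langle w^{r}\rangle_{Q}\leq C w(x)^{r}$ for a.e.\ $x\in Q$, which is exactly $w^{r}\in A_{1}$.

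For (ii)$\Rightarrow$(i), H\"older's inequality gives $\langle w\rangle_{Q}\leq\langle w^{r}\rangle_{Q}^{1/r}$, and $w^{r}\in A_{1}$ then yields $\langle w\rangle_{Q}\leq C w(x)$ for a.e.\ $x\in Q$, so $w\in A_{1}$. Since $w^{r}\in A_{1}\subset A_{\infty}$, the self-improvement of $A_{\infty}$ produces $\delta>0$ with $w^{r}\in\mathrm{RH}_{1+\delta}$, and combining with $w^{r}\in A_{1}$ yields $\langle w^{r(1+\delta)}\rangle_{Q}^{1/(r(1+\delta))}\leq C\langle w\rangle_{Q}$, i.e.\ $w\in\mathrm{RH}_{r(1+\delta)}$. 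I would then select $p$ from the bijection above so that $\beta=r(1+\delta)$, which gives $p'=r+(r-1)/\delta>r$ and hence $p\in(1,r')$; the $\mathrm{RH}_{r(1+\delta)}$ bound on $w$ together with the $A_{1}$ control $\langle w^{-(p'-1)}\rangle_{Q}\leq C\langle w\rangle_{Q}^{-(p'-1)}$ recovers the displayed estimate, so $w^{1-p'}\in A_{p'/r}$. The only subtle point is the algebraic bookkeeping identifying $\beta$ with a reverse-H\"older exponent of $w^{r}$; once this identification is in place, both implications reduce to a routine combination of Jensen's inequality with the $A_{1}$ bound on $w^{-1}$, with no further analytic input needed.
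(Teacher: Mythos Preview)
Your proposal is correct and follows essentially the same route as the paper. Both arguments unpack the $A_{p'/r}$ condition on $w^{1-p'}$ into the inequality $\langle w^{1-p'}\rangle_Q\,\langle w^{\beta}\rangle_Q^{p'/r-1}\le C$ with $\beta=r(p'-1)/(p'-r)$; for (i)$\Rightarrow$(ii) both use the Jensen/H\"older bound $\langle w\rangle_Q\langle w^{1-p'}\rangle_Q^{p-1}\ge 1$ to turn this into control of $\langle w^{\beta}\rangle_Q$ by $(\operatorname{essinf}_Q w)^{\beta}$, and for (ii)$\Rightarrow$(i) both invoke the reverse-H\"older self-improvement of $w^r\in A_1$ to produce an admissible $p$. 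The only cosmetic difference is that you package the intermediate conclusions as $w\in A_1\cap\mathrm{RH}_{\beta}$ and appeal to the standard equivalence $w^{r}\in A_1\Leftrightarrow w\in A_1\cap\mathrm{RH}_r$, whereas the paper writes the same estimates directly as $w^{\beta}\in A_1$; the content is identical. (One small remark: in your (i)$\Rightarrow$(ii) paragraph only the Jensen lower bound on $\langle w^{-(p'-1)}\rangle_Q^{1/(p'-1)}$ is actually used; the $A_1$ upper bound you also record is what you need---and do use---in the converse direction.)
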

\begin{proof} Let  $w\in A_{1}(\mathbb{R}^n)$ and $w^{1-p'}\in A_{p'/r}(\mathbb{R}^n)$ for some $p\in (1,\,r')$, then for any cube $Q\subset \mathbb{R}^n$,
\begin{eqnarray*}
\Big(\frac{1}{|Q|}\int_Qw^{1-p'}(x)dx\Big)\Big(\frac{1}{|Q|}\int_{Q}w^{r\frac{p'-1}{p'-r}}(x)dx\Big)^{\frac{p'}{r}-1}\le  [w^{1-p'}]_{A_{p'/r}},
\end{eqnarray*}
and so
\begin{eqnarray*}
\frac{1}{|Q|}\int_{Q}w^{r\frac{p'-1}{p'-r}}(x)dx&\le & [w^{1-p'}]_{A_{p'/r}}^{\frac{1}{\frac{p'}{r}-1}}\Big(\frac{1}{|Q|}\int_Qw^{1-p'}(x)dx\Big)^{-\frac{1}{\frac{p'}{r}-1}}\\
&\le&[w^{1-p'}]_{A_{p'/r}}^{\frac{1}{\frac{p'}{r}-1}}[w]_{A_1}^{\frac{1}{\frac{p'}{r}-1}\frac{1}{p-1}}\Big(\frac{1}{|Q|}\int_Qw(x)dx\Big)^{\frac{1}{\frac{p'}{r}-1}\frac{1}{p-1}}\\
&\le&[w^{1-p'}]_{A_{p'/r}}^{\frac{1}{\frac{p'}{r}-1}}[w]_{A_1}^{\frac{1}{\frac{p'}{r}-1}\frac{1}{p-1}}\big({\rm essinf}_{y\in Q}w(y)\Big) ^{\frac{p'-1}{\frac{p'}{r}-1}},
\end{eqnarray*}
where the second inequality follows from the fact that
$$\Big(\frac{1}{|Q|}\int_Qw(x)dx\Big)\Big(\frac{1}{|Q|}\int_Qw^{1-p'}(x)dx\Big)^{p-1}\geq 1.
$$
We thus deduce that $w^r\in A_1(\mathbb{R}^n)$, with $[w^r]_{A_1}\leq [w^{1-p'}]_{A_{p'/r}}^{\frac{1}{p'-1}}[w]_{A_1}^{r}$.

Let $w^r\in A_1(\mathbb{R}^n)$. By the reverse H\"older inequality, we know that
$w^{r\frac{p'-1}{p'-r}}\in A_1(\mathbb{R}^n)$ for some $p\in (1,\,r')$, and $[w]_{A_1}\leq [w^r]_{A_1}$, $[w^{r\frac{p'-1}{p'-r}}]_{A_1}\le [w^r]_{A_1}^{(p'-1)/(p'-r)}$. Thus for any cube $Q\subset \mathbb{R}^n$,
\begin{eqnarray*}
&&\Big(\frac{1}{|Q|}\int_Qw^{1-p'}(x)dx\Big)\Big(\frac{1}{|Q|}\int_{Q}w^{r\frac{p'-1}{p'-r}}(x)dx\Big)^{\frac{p'}{r}-1}\\
&&\quad\le \big[{\rm essinf}_{y\in Q}w(y)\big]^{1-p'}[w^{r\frac{p'-1}{p'-r}}]_{A_1}^{\frac{p'}{r}-1}
\big[{\rm essinf}_{y\in Q}w(y)\big]^{p'-1}\le [w^r]_{A_1}^{\frac{p'-1}{r}}.
\end{eqnarray*}
This shows that $w^{1-p'}\in A_{p'/r}(\mathbb{R}^n)$.\qed
\end{proof}

\begin{lemma}\label{lem2.4}
Let $T$ be a sublinear operator. Suppose that there exists a  constant $\tau\in (0,\,1)$, such that for all $\lambda\in (0,\,1/2)$,
$$\|M_{\lambda,\,T}f\|_{L^{1,\,\infty}(\mathbb{R}^n)}\le  \lambda^{-\tau}\|f\|_{L^1(\mathbb{R}^n)}.$$
Then for  $p_0\in (1,\,1/\tau)$,
$$\|\mathscr{M}_{p_0,\,T}f\|_{L^{1,\,\infty}(\mathbb{R}^n)}\le 2^{2+\frac{4}{1-\tau p_0}}\|f\|_{L^1(\mathbb{R}^n)}
,$$
where $\mathscr{M}_{p_0,\,T}$ is the maximal operator defined as (\ref{eq1.5}).
\end{lemma}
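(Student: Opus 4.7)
The plan is to derive a pointwise domination of $\mathscr{M}_{p_0,T}f$ by an integral of the operators $M_{\lambda,T}f$, then discretize dyadically and apply the weak-type hypothesis term by term via pigeon-holing.

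First, I would establish the pointwise estimate
$$\mathscr{M}_{p_0,T}f(x)^{p_0}\leq \int_0^1\bigl(M_{\lambda,T}f(x)\bigr)^{p_0}d\lambda.$$
The identity $\frac{1}{|Q|}\int_Q|g|^{p_0}\,dy=\int_0^1\bigl(g^*(\lambda|Q|)\bigr)^{p_0}d\lambda$, obtained from the equimeasurability of $g$ and $g^*$ via the change of variable $t=\lambda|Q|$, applied to $g=T(f\chi_{\mathbb{R}^n\setminus 3Q})\chi_Q$, converts the defining average for $\mathscr{M}_{p_0,T}$ into a $\lambda$-integral. Since $\bigl(T(f\chi_{\mathbb{R}^n\setminus 3Q})\chi_Q\bigr)^*(\lambda|Q|)\le M_{\lambda,T}f(x)$ whenever $x\in Q$, the upper bound on the right-hand side is independent of $Q$, and the claimed pointwise inequality follows by taking the supremum over cubes $Q\ni x$.

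Second, I would dyadically discretize the $\lambda$-integral. The function $\lambda\mapsto M_{\lambda,T}f(x)$ is non-increasing in $\lambda$ (the rearrangement $g^*(\lambda|Q|)$ is non-increasing in $\lambda$), so on each interval $[2^{-k-1},2^{-k}]$ of length $2^{-k-1}$ one has $M_{\lambda,T}f(x)\le M_{2^{-k-1},T}f(x)$, and hence
$$\mathscr{M}_{p_0,T}f(x)^{p_0}\le \sum_{k=0}^{\infty}2^{-k-1}\,M_{2^{-k-1},T}f(x)^{p_0}.$$
The boundary value at $\lambda=1/2$ is handled by taking a limit in the hypothesis to get $\|M_{1/2,T}f\|_{L^{1,\infty}}\le 2^{\tau}\|f\|_{L^1}$.

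Third, I would apply a pigeon-hole argument. For $x$ with $\mathscr{M}_{p_0,T}f(x)>\alpha$, the sum above exceeds $\alpha^{p_0}$, so for any sequence $\{w_k\}\subset(0,\infty)$ with $\sum_{k\ge 0}w_k\le 1$ there must exist a $k$ with $2^{-k-1}M_{2^{-k-1},T}f(x)^{p_0}>w_k\alpha^{p_0}$, i.e.\ $M_{2^{-k-1},T}f(x)>(w_k2^{k+1})^{1/p_0}\alpha$. Invoking the hypothesis gives
$$\Bigl|\{\mathscr{M}_{p_0,T}f>\alpha\}\Bigr|\le\frac{\|f\|_{L^1}}{\alpha}\sum_{k=0}^{\infty}\frac{2^{-(k+1)(1-\tau p_0)/p_0}}{w_k^{1/p_0}}.$$
Choosing $w_k=(1-\rho)\rho^{k}$ with $\rho=2^{-(1-\tau p_0)/2}$ forces the ratio of successive terms in the resulting geometric series to be $2^{-(1-\tau p_0)/(2p_0)}<1$, so the series converges and can be estimated by means of the crude inequality $1/(1-2^{-t})\le 2^{1/t}$ for $t\in(0,1]$, producing a bound of the form $2^{c/(1-\tau p_0)}$.

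The main obstacle I expect is purely computational: obtaining the clean exponent $2+4/(1-\tau p_0)$ from the geometric-series calculation in step three. The choice of the weights $w_k$ must be made carefully, since naive choices (such as $w_k=2^{-k-1}$) yield a \emph{divergent} sum because the requirement $\rho>2^{-(1-\tau p_0)}$ forces $\rho$ to be close to $1$ when $p_0$ approaches $1/\tau$. Balancing the loss in $(1-\rho)^{-1/p_0}$ against the convergence factor $(1-\rho^{1/p_0}2^{(1-\tau p_0)/p_0})^{-1}$ and bounding both with the explicit inequality $1-2^{-t}\ge 2^{-1/t}$ for $t\in(0,1]$ should produce the stated constant.
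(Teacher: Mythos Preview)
Your approach is correct and genuinely different from the paper's. Both proofs begin with the same pointwise inequality $\mathscr{M}_{p_0,T}f(x)^{p_0}\le\int_0^1(M_{\lambda,T}f(x))^{p_0}\,d\lambda$, but diverge afterwards. The paper (following Lerner) introduces a truncation $G_{p_0,T,N}f(x)=\big(\int_0^1(\min\{M_{\lambda,T}f(x),N\})^{p_0}\,d\lambda\big)^{1/p_0}$ and, via H\"older's inequality applied to the split $(0,2^{-kp_0}]\cup(2^{-kp_0},1]$, obtains the self-improving estimate
\[
G_{p_0,T,N}f(x)\le 2^{-(k-1)}G_{kp_0,T,N}f(x)+M_{2^{-kp_0},T}f(x),
\]
which it then iterates; the choice $k=\lfloor 4/(1-\tau p_0)\rfloor+1$ forces geometric decay, and the truncation lets the iteration terminate. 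Your route is more elementary: you discretize the $\lambda$-integral dyadically once, pigeon-hole, and sum a single geometric series.

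The trade-off is quantitative. Your computation yields a bound of the form $2^{C(p_0+p_0^{-1})/(1-\tau p_0)}$ rather than the paper's $2^{2+4/(1-\tau p_0)}$: the extra factor of roughly $p_0$ in the exponent arises because the pigeon-hole step introduces $w_k^{-1/p_0}$, and balancing this against the decay $2^{-(k+1)(1-\tau p_0)/p_0}$ cannot remove the $p_0$-dependence. The paper's bootstrap avoids this by passing to $G_{kp_0,T,N}$, effectively replacing $p_0$ by $k^jp_0$ and exploiting the uniform bound from truncation. For the applications in this paper (Theorem~\ref{thm1.1}), where $p_0$ is a fixed number depending only on $q$ and an auxiliary $\varepsilon$, your weaker constant would do no harm; but you should be aware that your argument does not recover the exact constant stated in the lemma.
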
\begin{proof} We employ the argument used in the proof of Lemma 3.3 in \cite{ler4}.  As it was proved in \cite{ler4}, $$\mathscr{M}_{p_0,\,T}f(x)\leq\Big(\int_{0}^1\big(M_{\lambda,\,T}f(x)\big)^{p_0}d\lambda\Big)^{\frac{1}{p_0}}.
$$For $N>0$, denote
$$G_{p_0,\,T,\,N}f(x)=\Big(\int_{0}^1\big(\min\{M_{\lambda,\,T}f(x),\,N\}\big)^{p_0}d\lambda\Big)^{\frac{1}{p_0}},
$$
and
$$\mu_f(\alpha,\,R)=|\{x\in\mathbb{R}^n:\,|x|\leq R,\, |f(x)|>\alpha\}|,\,\,\,\alpha,\,R>0.$$
Let $p_0\in (1,\,\infty)$ such that $\tau p_0\in (0,\,1)$, $k=\lfloor\frac{4}{1-\tau p_0}\rfloor+1$, where and in the following, for $a\in\mathbb{R}$, $\lfloor a\rfloor$ denotes the integer part of $a$. By H\"older's inequality,
\begin{eqnarray*}
G_{p_0,\,T,\,N}f(x)&\leq &\Big(\int^{\frac{1}{2^{kp_0}}}\big(\min\{M_{\lambda,\,T}f(x),\,N\}\big)^{p_0}d\lambda\Big)^{\frac{1}{p_0}}+M_{1/2^{kp_0},\,T}f(x)\\
&\leq&\frac{1}{2^{k-1}}G_{kp_0,\,T,\,N}f(x)+M_{1/2^{kp_0},\,T}f(x).
\end{eqnarray*}
Therefore,
\begin{eqnarray*}
\mu_{G_{p_0,\,T,\,N}f}(\alpha,\,R)&\leq &\mu_{G_{kp_0,\,T,\,N}f}(2^{k-2}\alpha,\,R)+\mu_{M_{1/2^{kp_0},\,T}f}(\alpha/2,\,R)\\
&\leq&\mu_{G_{kp_0,\,T,\,N}f}(2^{k-2}\alpha,\,R)+\frac{1}{\alpha}2^{\tau kp_0+1}\|f\|_{L^1(\mathbb{R}^n)}.
\end{eqnarray*}
Repeating the last inequality $j$ times, we have that
\begin{eqnarray*}
\mu_{G_{p_0,\,T,\,N}f}(\alpha,\,R)&\leq &\mu_{G_{k^jp_0,\,T,\,N}f}(2^{j(k-2)}\alpha,\,R)\\
&&+\frac{2^{k-2}}{\alpha}\sum_{l=1}^j\Big(\frac{2^{\tau k p_0+1}}{2^{k-2}}\Big)^l\|f\|_{L^1(\mathbb{R}^n)}.
\end{eqnarray*}
Since $G_{p_0,\,T,\,N}f$ is uniformly bounded in $p_0$, we obtain that $\mu_{G_{k^{j}p_0,\,T,\,N}f}(\alpha,\,R)\rightarrow 0$ as $j\rightarrow\infty$.
We finally deduce that
$$ \mu_{G_{p_0,\,T,\,N}f}(\alpha,\,R)\le  2^{2+\frac{4}{1-\tau p_0}} \frac{1}{\alpha}\|f\|_{L^1(\mathbb{R}^n)}.$$
This completes the proof of Lemma \ref{lem2.4}.\qed\end{proof}
Let $\eta\in (0,\,1)$ and $\mathcal{S}=\{Q_j\}$ be a family of cubes. We say that $\mathcal{S}$ is $\eta$-sparse,  if for each fixed $Q\in \mathcal{S}$, there exists a measurable subset $E_Q\subset Q$, such that $|E_Q|\geq \eta|Q|$ and $E_{Q}$'s are pairwise disjoint.
For  sparse family $\mathcal{S}$ and constants $\beta$, $r\in[0,\,\infty)$, we define the bilinear sparse operator $\mathcal{A}_{\mathcal{S};\,L(\log L)^{\beta},\,L^{r}}$  by
$$\mathcal{A}_{\mathcal{S};\,L(\log L)^{\beta},L^r}(f,g)=\sum_{Q\in\mathcal{S}}|Q|\|f\|_{L(\log L)^{\beta},\,Q}\langle|g|\rangle_{Q,\,r}.$$
We denote $\mathcal{A}_{\mathcal{S};\,L(\log L)^1,\,L^r}$ by $\mathcal{A}_{\mathcal{S};\,L\log L,\,L^r}$ for simplicity,
and   $\mathcal{A}_{\mathcal{S};\,L(\log L)^{0},L^r}$ by $\mathcal{A}_{\mathcal{S};\,L,\,L^r}$.

\begin{lemma}\label{lem2.5}Let  $\alpha,\,\beta\in \mathbb{N}\cup\{0\}$ and $U$ be an operator. Suppose that for any  $r\in (1,\,3/2)$, and bounded function $f$ with compact support, there exists a sparse family of cubes $\mathcal{S}$, such that for any function $g\in L^1(\mathbb{R}^n)$,
\begin{eqnarray}\label{eq4.1}\Big|\int_{\mathbb{R}^n}Uf(x)g(x)dx\Big|\leq r'^{\alpha}\mathcal{A}_{\mathcal{S};\,L(\log L)^{\beta},\,L^r}(f,\,g).\end{eqnarray}
Then for any $u\in A_1(\mathbb{R}^n)$ and bounded function $f$ with compact support,
\begin{eqnarray*}&&w(\{x\in\mathbb{R}^n:\, |Uf(x)|>\lambda\})\\
&&\quad\lesssim_{n,\,\alpha,\,\beta} [w]_{A_{\infty}}^\alpha\log^{1+\beta}({\rm e}+[w]_{A_{\infty}})[w]_{A_1}\int_{\mathbb{R}^d}\frac{|f(x)|}{\lambda}\log ^{\beta}\Big({\rm e}+\frac{|f(x)|}{\lambda}\Big)w(x)dx.\end{eqnarray*}
\end{lemma}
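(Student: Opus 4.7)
I plan to combine a Calder\'on--Zygmund decomposition of $f$ with the sparse domination hypothesis, in the spirit of P\'erez--Lerner--Rivera-R\'{\i}os. Normalize so that $\lambda=1$ and $f\geq 0$. Perform a CZ decomposition of $f$ at height $1$: disjoint dyadic cubes $\{Q_j\}$ with $1<\langle f\rangle_{Q_j}\leq 2^n$; let $\Omega^*=\bigcup_j 3Q_j$ and split $f=g+h$ with $g=f\chi_{\Omega^c}+\sum_j\langle f\rangle_{Q_j}\chi_{Q_j}$ (so $\|g\|_\infty\leq 2^n$) and $h=\sum_j h_j$, $h_j=(f-\langle f\rangle_{Q_j})\chi_{Q_j}$ of zero mean on each $Q_j$. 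Then
\begin{equation*}
w(\{|Uf|>1\})\leq w(\Omega^*)+w(\{|Ug|>1/2\})+w(\{|Uh|>1/2\}\cap(\Omega^*)^c),
\end{equation*}
and the $A_1$ hypothesis, combined with $\langle f\rangle_{Q_j}>1$, yields $w(\Omega^*)\lesssim[w]_{A_1}\int f\,w\leq[w]_{A_1}\int f\log^\beta(e+f)w$ directly.

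For the $g$- and $h$-terms I would invoke the sparse bound (\ref{eq4.1}) by duality: pair $Ug$ with $\mathrm{sgn}(Ug)\chi_{\{|Ug|>1/2\}}w$ and $Uh$ with $\mathrm{sgn}(Uh)\chi_{(\Omega^*)^c}w$. The integrability exponent will be chosen as the sharp reverse H\"older exponent of $w$, namely $r-1\sim 1/[w]_{A_\infty}$, so that $r\in(1,3/2)$, $r'\sim[w]_{A_\infty}$, and $\langle w\rangle_{Q,r}\lesssim[w]_{A_1}\inf_Q w$; this converts the sparse-bound factor $r'^\alpha$ into the desired $[w]_{A_\infty}^\alpha[w]_{A_1}$. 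For the $h$-term, any sparse cube $Q$ with $\langle\chi_{(\Omega^*)^c}w\rangle_{Q,r}>0$ satisfies $Q\not\subset\Omega^*$, and by dyadic nesting this forces $h\chi_Q=\sum_{Q_j\subset Q}h_j$; the zero-mean property of each $h_j$ (via Jensen applied to $t\mapsto t\log^\beta(e+t)$) then controls $\|h\|_{L(\log L)^\beta,Q}$ by $\|f\|_{L(\log L)^\beta,Q}$. For the $g$-term, $\|g\|_\infty\leq 2^n$ yields an analogous control of $\|g\|_{L(\log L)^\beta,Q}$.

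Both reductions collapse into controlling the sparse-Orlicz sum $\Sigma=\sum_{Q\in\mathcal{S}}|Q|\|f\|_{L(\log L)^\beta,Q}\inf_Q w$ by a constant multiple of $\int f\log^\beta(e+f)w$. By sparseness one has $\Sigma\leq\eta^{-1}\int M_{L(\log L)^\beta}f\cdot w$, but this weighted $L^1$ bound alone is not sharp enough. The main obstacle is to recover the exact factor $\log^{1+\beta}(e+[w]_{A_\infty})[w]_{A_1}$: this is achieved by an Orlicz-duality argument using (\ref{eq1.final}) together with the John--Nirenberg/reverse H\"older exponential integrability of $w\in A_\infty$, applied level-by-level in a layer-cake decomposition of $f$. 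This delicate Orlicz-weight interaction is the technical heart of the proof.
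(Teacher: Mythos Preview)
The paper does not prove this lemma at all: immediately after the statement, the authors write ``Lemma \ref{lem2.5} is Corollary 3.6 in \cite{hulai}.'' So there is no in-paper argument for you to match; your proposal is an attempt to reconstruct the proof from the cited reference.

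Your outline follows the expected template (Calder\'on--Zygmund at height $\lambda$, duality against the sparse bound, sharp reverse-H\"older choice $r-1\sim[w]_{A_\infty}^{-1}$ so that $r'^{\alpha}\sim[w]_{A_\infty}^{\alpha}$ and $\langle w\rangle_{Q,r}\lesssim[w]_{A_1}\inf_Q w$), and this is indeed the skeleton of the argument in \cite{hulai}. Two points in your plan deserve correction, though. First, the ``zero-mean property of each $h_j$'' is a red herring here: Luxemburg norms see only $|h|$, not cancellation, so Jensen on the mean-zero piece does nothing. The actual control of $\|h\|_{L(\log L)^\beta,Q}$ comes from the pointwise bound $|h_j|\leq|f|\chi_{Q_j}+\langle f\rangle_{Q_j}\chi_{Q_j}$ together with $\langle f\rangle_{Q_j}\leq 2^n$. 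Second, your nesting claim (that $Q\not\subset\Omega^*$ forces $h\chi_Q=\sum_{Q_j\subset Q}h_j$) is not automatic, since the sparse family $\mathcal{S}$ produced by the hypothesis need not live in the same dyadic grid as the CZ cubes; one only gets $\ell(Q_j)\lesssim\ell(Q)$ for the $Q_j$ meeting $Q$, and the estimate has to be arranged accordingly.

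Finally, the passage that produces the exact factor $\log^{1+\beta}({\rm e}+[w]_{A_\infty})$ is the real content of the lemma, and you have only named it. In \cite{hulai} this is obtained not by a single Orlicz-duality/John--Nirenberg step, but by combining the Carleson-embedding control of the sparse sum with the fact that $\|f\|_{L(\log L)^\beta,Q}\lesssim\langle |f|\rangle_Q$ once $|f|$ is essentially bounded (the good part) together with a separate stopping-time estimate on the bad part; the extra logarithm enters when one sums the $L(\log L)^\beta$ averages over the sparse family against $w$. Your sketch is headed in the right direction but stops exactly where the work begins.
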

Lemma \ref{lem2.5} is Corollary 3.6 in \cite{hulai}.

\begin{theorem}\label{thm3.1}Let $p_0\in (1,\,\infty)$, $r\in (1,\,\infty)$, $b\in {\rm BMO}(\mathbb{R}^n)$, $T$ be a linear operator and $T_b$ be the commutator of $T$. Suppose that both of operators $T$ and $\mathscr{M}_{p_0,\,T}$   are bounded from $L^1(\mathbb{R}^n)$ to $L^{1,\,\infty}(\mathbb{R}^n)$ with bound $1$. Then for
bounded functions $f$ with compact supports, there exists a $\frac{1}{2}\frac{1}{3^n}$-sparse family $\mathcal{S}$  and functions ${\rm H}_1f$, ${\rm H}_2f$, such that for each   function $g\in L^{rp_0'}_{{\rm loc}}(\mathbb{R}^n)$,
\begin{eqnarray}\label{eq4.2x}\Big|\int_{\mathbb{R}^n}{\rm H}_1f(x)g(x)dx\Big|\lesssim_{n}\|b\|_{{\rm BMO}(\mathbb{R}^n)} r'p_0'\mathcal{A}_{\mathcal{S};\,L^1,\,L^{rp_0'}}(f,\,g),\end{eqnarray}
\begin{eqnarray}\label{eq4.3x}\Big|\int_{\mathbb{R}^n}{\rm H}_2f(x)g(x)dx\Big|\lesssim_{n}\|b\|_{{\rm BMO}(\mathbb{R}^n)}\mathcal{A}_{\mathcal{S};\,L\log L,\,L^{p_0'}}(f,\,g),\end{eqnarray}
and for a. e. $x\in\mathbb{R}^n$,
$$T_{b}f(x)={\rm H}_1f(x)+{\rm H}_2f(x).
$$
\end{theorem}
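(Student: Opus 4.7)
The plan is to combine the Coifman--Rochberg--Weiss identity $T_b f = (b - b_{3Q})Tf - T((b - b_{3Q})f)$ with a principal-cube stopping-time construction, following the template of \cite{ler4} and \cite{riv}. Fix a dyadic lattice $\mathcal{D}$ and a starting cube $Q_0 \in \mathcal{D}$ with $3Q_0 \supset \mathrm{supp}(f)$. Recursively, given $Q \in \mathcal{S}$, I take its children in $\mathcal{S}$ to be the maximal $Q' \in \mathcal{D}(Q)$ on which at least one of the four thresholds $\langle|f|\rangle_{Q'} \leq c_n\langle|f|\rangle_{3Q}$, $\|f\|_{L\log L,\,Q'} \leq c_n\|f\|_{L\log L,\,3Q}$, $\inf_{y\in Q'}\mathscr{M}_{p_0,T}(f\chi_{3Q})(y) \leq c_n\langle|f|\rangle_{3Q}$, or $\inf_{y\in Q'}\mathscr{M}_{p_0,T}((b-b_{3Q})f\chi_{3Q})(y) \leq c_n\|b\|_{\mathrm{BMO}}\|f\|_{L\log L,\,3Q}$ fails. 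With $c_n$ chosen large, the union of all children covers at most half of $Q$: the first two thresholds are handled by weak $(1,1)$ of the Hardy--Littlewood and the $L\log L$ Orlicz maximal operators, and the last two by the hypothesized weak $(1,1)$ of $\mathscr{M}_{p_0,T}$ combined with \eqref{eq1.final} to rewrite $\|(b-b_{3Q})f\chi_{3Q}\|_{L^1} \lesssim \|b\|_{\mathrm{BMO}}\|f\|_{L\log L,\,3Q}|3Q|$. Setting $E_Q := Q \setminus \bigcup_{Q'} Q'$ gives $|E_Q| \geq \tfrac12|Q|$; combined with the $3Q$-expansions (handled by the three-lattice trick) this produces the stated $\tfrac{1}{2\cdot 3^n}$-sparse family.

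On each $E_Q$ I apply the commutator identity and split $f = f\chi_{3Q} + f\chi_{(3Q)^c}$ in both terms. I assign the cross piece $(b(x) - b_{3Q})T(f\chi_{(3Q)^c})(x)$ to ${\rm H}_1 f$; the remaining three pieces $(b - b_{3Q})T(f\chi_{3Q})$, $T((b-b_{3Q})f\chi_{3Q})$, and $T((b-b_{3Q})f\chi_{(3Q)^c})$ I assign to ${\rm H}_2 f$, after first telescoping each "away" part $f\chi_{(3Q)^c}$ down to $f\chi_{3Q^\pi\setminus 3Q}$ using the principal-cube parent $Q^\pi$, where the stopping conditions on $\mathscr{M}_{p_0,T}$ at the parent scale are consumed.

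For \eqref{eq4.2x}, pair ${\rm H}_1 f$ with $g$ on each $E_Q$ and apply H\"older with exponents $(p_0, p_0')$: the $L^{p_0}$ average of $T(f\chi_{(3Q)^c})$ over $Q$ is bounded by $\mathscr{M}_{p_0,T}f$ at any point of $E_Q$, hence by $c_n\langle|f|\rangle_{3Q}$ from the third stopping condition. A second H\"older with exponents $(r, r')$ applied to $L^{p_0'}$ then yields
$$\bigl\||b - b_{3Q}|\,|g|\bigr\|_{L^{p_0'},\,Q} \leq \|b - b_{3Q}\|_{L^{r'p_0'},\,Q}\,\|g\|_{L^{rp_0'},\,Q} \lesssim r'p_0'\,\|b\|_{\mathrm{BMO}}\,\langle|g|\rangle_{Q,\,rp_0'},$$
the last inequality being John--Nirenberg. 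Summing over $Q \in \mathcal{S}$ gives \eqref{eq4.2x}. For \eqref{eq4.3x}, the local piece $T((b - b_{3Q})f\chi_{3Q})$ is paired with $g$ via a Kolmogorov-type argument using the weak $(1,1)$ of $T$, together with \eqref{eq1.final} and $\|b - b_{3Q}\|_{\exp L,\,Q} \lesssim \|b\|_{\mathrm{BMO}}$, producing a contribution $\lesssim |Q|\,\|f\|_{L\log L,\,3Q}\,\langle|g|\rangle_{Q,\,p_0'}$; the mixed piece $(b - b_{3Q})T(f\chi_{3Q})$ is handled dually, and the away term $T((b-b_{3Q})f\chi_{(3Q)^c})$ is absorbed by the fourth stopping condition controlling $\mathscr{M}_{p_0,T}((b-b_{3Q})f\chi_{3Q^\pi})$ at the parent scale. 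Summing yields \eqref{eq4.3x}.

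The main obstacle I anticipate is the bookkeeping in the principal-cube telescoping: at each scale $Q$ the away parts $f\chi_{(3Q)^c}$ and $(b - b_{3Q})f\chi_{(3Q)^c}$ must be reduced cleanly to contributions indexed by the parent $Q^\pi$, so that the two stopping conditions on $\mathscr{M}_{p_0,T}$ at $Q^\pi$ feed exactly one sparse term of $\mathcal{S}$, and in particular so that the $r'p_0'$ factor stays confined to the ${\rm H}_1$ estimate. Once the four-way stopping and this telescoping are set up carefully, the remaining work reduces to applications of H\"older, John--Nirenberg, Kolmogorov, and the Orlicz H\"older inequality \eqref{eq1.final}.
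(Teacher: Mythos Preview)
Your proposal follows the right template (Lerner-style iterated stopping combined with the commutator identity), but two specific choices create gaps.

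\textbf{Stopping conditions.} The paper stops on four exceptional sets inside $Q_0$: two \emph{pointwise} level sets
\[
E_1=\{|T(f\chi_{3Q_0})|>D\langle|f|\rangle_{3Q_0}\},\qquad
E_2=\{|T((b-\langle b\rangle_{Q_0})f\chi_{3Q_0})|>D\langle|(b-\langle b\rangle_{Q_0})f|\rangle_{3Q_0}\},
\]
together with the two $\mathscr{M}_{p_0,T}$ conditions (your 3 and 4), and then runs a Calder\'on--Zygmund decomposition of $\chi_E$. Your conditions 1 and 2 (stopping on $\langle|f|\rangle_{Q'}$ and $\|f\|_{L\log L,Q'}$) do not replace $E_1,E_2$: they give no pointwise control of $T(f\chi_{3Q})$ or $T((b-b_{3Q})f\chi_{3Q})$ on $E_Q$. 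Your ``Kolmogorov-type argument'' for the local piece $T((b-b_{3Q})f\chi_{3Q})$ does not work here: Kolmogorov passes from weak $(1,1)$ only to $L^s$ with $s<1$, and that cannot be paired with $g\in L^{p_0'}$ by H\"older. The paper instead uses that $Q_0\setminus\cup_lP_l\subset E^c$ a.e., so $|T((b-\langle b\rangle_{Q_0})f\chi_{3Q_0})|\le D\langle|(b-\langle b\rangle_{Q_0})f|\rangle_{3Q_0}$ pointwise there, which pairs with $\langle|g|\rangle_{Q_0}$ trivially.

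\textbf{The $H_1/H_2$ split.} The paper assigns \emph{every} term of the form $(b-\langle b\rangle_{Q_0})T(\cdot)$ to $H_1$ and \emph{every} term $T((b-\langle b\rangle_{Q_0})\cdot)$ to $H_2$. This is what confines the $r'p_0'$ factor (coming solely from John--Nirenberg on $\|b-\langle b\rangle_{Q_0}\|_{L^{r'p_0'}}$) to the $H_1$ estimate while keeping $H_2$ free of any $p_0$-dependence. Your assignment of $(b-b_{3Q})T(f\chi_{3Q})$ to $H_2$ (``handled dually'') would, even granted the missing pointwise bound, produce $\langle|f|\rangle_{3Q}\,\|g\|_{L\log L,Q}$; converting $\|g\|_{L\log L,Q}$ to $\langle|g|\rangle_{Q,p_0'}$ costs a factor $(p_0')'=p_0$, which violates the $\lesssim_n$ in \eqref{eq4.3x}. (By contrast, in $H_1$ the paper absorbs this term into $r'p_0'\langle|g|\rangle_{Q,rp_0'}$, since $(rp_0')'\le r'p_0'$.)

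Both gaps close if you replace your conditions 1--2 by the pointwise conditions $E_1,E_2$ and re-sort the decomposition by the position of $b$ relative to $T$; the conditions on $\langle|f|\rangle$ and $\|f\|_{L\log L}$ are then unnecessary.
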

\begin{proof}
We will employ the ideas in \cite{ler4}, see also the proof of Theorem 3.2 in \cite{hulai}. Without loss of generality, we may assume that $\|b\|_{{\rm BMO}(\mathbb{}R^n)}=1$. For a fixed cube $Q_0$, define the  local analogy of $\mathscr{M}_{p_0,\,T}$ by
$$ \mathscr{M}_{p_0,\,T;\,Q_0}f(x)=\sup_{Q\ni x,\, Q\subset Q_0}\Big(\frac{1}{|Q|}\int_{Q}|T(f\chi_{3Q_0\backslash 3Q})(y)|^{p_0}dy\Big)^{\frac{1}{p_0}}.$$
Let $E=\cup_{j=1}^4E_j$ with
$$E_1=\big\{x\in Q_0:\, |T(f\chi_{3Q_0})(x)|>D\langle|f|\rangle_{3Q_0}\big\},$$
$$E_2=\big\{x\in Q_0:\, |T\big((b-\langle b\rangle_{Q_0})f\chi_{3Q_0}\big)(x)|>D\langle|(b-\langle b\rangle_{Q_0})f|\rangle_{3Q_0}\big\},$$
$$E_3=\{x\in Q_0:\,\mathscr{M}_{p_0,\,T;\,Q_0}f(x)>D\langle |f|\rangle_{3Q_0}\},$$ and
$$E_4=\big\{x\in Q_0:\, \mathscr{M}_{p_0,\,T_{\Omega};\,Q_0}\big((b-\langle b\rangle_{Q_0})f\big)(x)>D\langle|b-\langle b\rangle_{Q_0}||f|\rangle_{Q_0}\big\},$$
where $D$  is a positive constant. If we choose $D$ large enough, it then follows from the weak type $(1,\,1)$ boundedness of $T$ and $\mathscr{M}_{p_0,\,T}$ that
$$|E|\le \frac{1}{2^{n+2}}|Q_0|.$$
Now on the cube $Q_0$, we apply the Calder\'on-Zygmund decomposition to $\chi_{E}$ at level $\frac{1}{2^{n+1}}$, and obtain pairwise disjoint cubes $\{P_j\}\subset \mathcal{D}(Q_0)$, such that
$$\frac{1}{2^{n+1}}|P_j|\leq |P_j\cap E|\leq \frac{1}{2}|P_j|$$
and $|E\backslash\cup_jP_j|=0$.  Observe that $\sum_j|P_j|\leq \frac{1}{2}|Q_0|$. Let
$$G_{Q_0}^1(x)=(b(x)-\langle b\rangle_{Q_0})T(f\chi_{3Q_0})\chi_{Q_0\backslash \cup_{l}P_l}(x)+
\sum_{l}(b(x)-\langle b\rangle_{Q_0})T(f\chi_{3Q_0\backslash 3P_l})\chi_{P_l}(x),
$$
$$G_{Q_0}^2(x)=T\big((b-\langle b\rangle_{Q_0})f\chi_{3Q_0}\big)\chi_{Q_0\backslash \cup_{l}P_l}(x)+
\sum_{l}T\big((b-\langle b\rangle_{Q_0})f\chi_{3Q_0\backslash 3P_l}\big)\chi_{P_l}(x).
$$It then follows that
$$T_{b}(f\chi_{3Q_0})(x)\chi_{Q_0}(x)=G_{Q_0}^1(x)+G_{Q_0}^2(x)+\sum_{l}T_{b}(f\chi_{3P_l})(x)\chi_{P_l}(x).
$$

We now estimate $G_{Q_0}^1$ and $G_{Q_0}^2$. By (\ref{eq1.final}) and the John-Nirenberg inequality (see \cite[p.128]{gra}), we know that
\begin{eqnarray*}\int_{Q_0}|b(x)-\langle b\rangle_{Q_0}||h(x)|dx&\lesssim& |Q_0|\|b-\langle b\rangle_{Q_0}\|_{{\rm exp}L,\,Q}\|h\|_{L\log L,\,Q_0}\\
&\lesssim& |Q_0|\|b\|_{{\rm BMO}(\mathbb{R}^n)}\|h\|_{L\log L,\,Q_0}.\end{eqnarray*}
This, along with  the fact that $|E\backslash\cup_jP_j|=0$,  implies that
$$\Big|\int_{Q_0\backslash \cup_{l}P_l}(b(x)-\langle b\rangle_{Q_0})T(f\chi_{3Q_0})(x)g(x)dx\Big|\lesssim \langle |f|\rangle_{3Q_0}\|g\|_{L\log L,\,Q_0}|Q_0|,$$
and
$$\Big|\int_{Q_0\backslash \cup_{l}P_l}T\big((b-\langle b\rangle_{Q_0})f\chi_{3Q_0}\big)(x)g(x)dx\Big|\lesssim \langle|f|\rangle_{L\log L,\,3Q_0}\langle|g|\rangle_{Q_0}|Q_0|.$$
On the other hand, the fact that $P_j\cap E^c\not =\emptyset$ tells us that
\begin{eqnarray*}
&&\sum_{l}\Big|\int_{P_l}(b(x)-\langle b\rangle_{Q_0})T(f\chi_{3Q_0\backslash 3P_l})(x)g(x)dx\Big|\\
&&\quad\lesssim \sum_l\Big(\int_{P_l}|b(x)-\langle b\rangle_{Q_0}|^{p_0'}|g(x)|^{p_0'}dx\Big)^{\frac{1}{p_0'}}
\Big(\int_{P_l}|T(f\chi_{3Q_0\backslash 3P_l})(x)|^{p_0}dx\Big)^{p_0}\\
&&\quad\lesssim \sum_l\Big(\int_{P_l}|b(x)-\langle b\rangle_{Q_0}|^{p_0' r '}\Big)^{\frac{1}{p_0' r '}}
|P_l|^{\frac{1}{p_0'r}+\frac{1}{p_0}}\langle |g|\rangle_{P_l,\,p_0'r}\inf_{y\in P_l}\mathscr{M}_{T,\,p_0,Q_0}f(y)\\
&&\quad\lesssim r'p_0'\langle |f|\rangle_{3Q_0}\sum_l|P_l|\langle |g|\rangle_{P_l,\,rp_0'}\lesssim r'p_0'\langle |f|\rangle_{3Q_0}\langle |g|\rangle_{Q_0,\,rp_0'}|Q_0|,
\end{eqnarray*}here we have invoked the following estimate
$$\Big(\int_{Q_0}|b(x)-\langle b\rangle_{Q_0}|^{p_0' r '}dx\Big)^{\frac{1}{p_0' r '}}\lesssim r'p_0'|Q_0|^{\frac{1}{p_0' r '}},
$$
see \cite[p. 128]{gra}. Similarly, we can deduce that
\begin{eqnarray*}
&&\sum_{l}\Big|\int_{P_l}T\big((b-\langle b\rangle_{Q_0})f\chi_{3Q_0\backslash 3P_l}\big)(x)g(x)dx\Big|\\
&&\quad\lesssim \sum_l
|P_l|\langle |g|\rangle_{P_l,\,p_0'}\inf_{y\in P_l}\mathscr{M}_{p_0, T;\,Q_0}\big(b-\langle b\rangle_{Q_0}\big)f(y)\\
&&\quad\lesssim \langle |f|\rangle_{3Q_0}\sum_l|P_l|\langle |g|\rangle_{P_l,\,p_0'}\lesssim \langle |f|\rangle_{3Q_0}\langle |g|\rangle_{Q_0,\,p_0'}|Q_0|.
\end{eqnarray*}
Therefore,
for function $g\in L_{\rm loc }^r(\mathbb{R}^n)$,
\begin{eqnarray}\label{eq4.4}\Big|\int_{\mathbb{R}^n} G_{Q_0}^1(x)g(x)dx\Big|\lesssim r'p_0'\langle |f|\rangle_{3Q_0}\langle |g|\rangle_{Q_0,\,rp_0'}|Q_0|.
\end{eqnarray}
and
\begin{eqnarray}\label{eq4.5}
\Big|\int_{\mathbb{R}^n} G_{Q_0}^2(x)g(x)dx\Big|\lesssim \|f\|_{L\log L,\,3Q_0}\langle |g|\rangle_{Q_0,\,p_0'}|Q_0|.
\end{eqnarray}

We  repeat  argument above with $T(f\chi_{3Q_0})(x)\chi_{Q_0}$ replaced by  $T(\chi_{3P_l})(x)\chi_{P_l}(x)$, and so on.
Let $Q_0^{j_0}=Q_0$, $Q_{0}^{j_1}=P_{j}$,  and for fixed $j_1,\,\dots,\,j_{m-1}$, $\{Q_{0}^{j_1...j_{m-1}j_m}\}_{j_m}$ be the cubes obtained at the $m$-th stage of the decomposition process to the cube $Q_{0}^{j_1...j_{m-1}}$. Set $\mathcal{F}=\{Q_0\}\cup_{m=1}^{\infty}\cup_{j_1,\dots,j_m}\{Q_{0}^{j_1\dots j_m}\}$. Then $\mathcal{F}\subset \mathcal{D}(Q_0)$ is a $\frac{1}{2}$-sparse  family.
We define the functions $H_{1,\,Q_0}$ and $H_{2,\,Q_0}$ by
\begin{eqnarray*}H_{1,Q_0}(x)&=&\sum_{m=1}^{\infty}\sum_{j_1\dots j_{m-1}}(b(x)-\langle b\rangle_{Q_0^{j_1,\dots,j_{m-1}}})\\
&&\qquad\times T(f\chi_{3Q_{0}^{j_1\dots j_{m-1}}})(x)\chi_{Q_{0}^{j_1,\dots,j_{m-1}}\backslash \cup_{j_m}Q_{0}^{j_1,\dots,j_m}}(x)\\
&&+\sum_{m=1}^{\infty}\sum_{j_1\dots j_m}\big(b(x)-\langle b\rangle_{Q_0^{j_1,\dots,j_{m-1}}}\big)\\
&&\quad \times T\big(f\chi_{3Q_{0}^{j_1\dots j_{m-1}}\backslash \cup_{j_{m}}3Q_{0}^{j_1\dots j_{m}}}\big)(x)\chi_{Q_{0}^{j_1\dots j_{m}}}(x),
\end{eqnarray*}
and \begin{eqnarray*}H_{2,Q_0}(x)&=&\sum_{m=1}^{\infty}\sum_{j_1\dots j_{m-1}}T\Big((b(x)-\langle b\rangle_{Q_0^{j_1,\dots,j_{m-1}}})f\chi_{3Q_{0}^{j_1\dots j_{m-1}}}\Big)(x)\\
&&\quad\times\chi_{Q_{0}^{j_1,\dots,j_{m-1}}\backslash \cup_{j_m}Q_{0}^{j_1,\dots,j_m}}(x)\\
&&+\sum_{m=1}^{\infty}\sum_{j_1\dots j_m}T\Big(\big(b(x)-\langle b\rangle_{Q_0^{j_1,\dots,j_{m-1}}}\big)f\chi_{3Q_{0}^{j_1\dots j_m}\backslash \cup_{j_{m+1}}3Q_{0}^{j_1\dots j_{m-1}}}\Big) (x)\\
&&\quad\qquad \times \chi_{Q_{0}^{j_1\dots j_{m-1}}}(x).
\end{eqnarray*}
Then for a. e. $x\in Q_0$,
$$T_{b}(f\chi_{3Q_0})(x)=H_{1,Q_0}(x)+H_{2,Q_0}(x).
$$
Moreover, as in inequalities (\ref{eq4.4})-(\ref{eq4.5}), the process of producing $\{Q_0^{j_1...j_m}\}$ leads to that
$$\Big|\int_{Q_0}g(x)H_{1,Q_0}(x)dx\Big|\lesssim r'p_0'\sum_{Q\in \mathcal{F}}|Q|\langle |f|\rangle_{3Q}\langle|g|\rangle_{Q,\,rp_0'},$$
and
$$\Big|\int_{Q_0}g(x)H_{2,Q_0}(x)dx\Big|\lesssim \sum_{Q\in \mathcal{F}}|Q|\|f\|_{L\log L,\,3Q}\langle|g|\rangle_{Q,\,p_0'}.$$

We can now conclude the proof of Theorem \ref{thm3.1}. In fact, as in \cite{ler4}, we decompose $\mathbb{R}^n$ by cubes $\{R_l\}$, such that ${\rm supp}f\subset 3R_l$ for each $l$, and $R_l$'s have disjoint interiors.
Then for a. e. $x\in\mathbb{R}^n$,
\begin{eqnarray*}T_{b}f(x)=\sum_lH_{1,R_l}f(x)+\sum_lH_{2, R_l}f(x)=:{\rm H}_1f(x)+{\rm H}_2f(x).\end{eqnarray*}
Obviously, ${\rm H}_1$, ${\rm H}_2$ satisfies (\ref{eq4.2x}) and (\ref{eq4.3x}).
Our desired conclusion then follows directly.\qed\end{proof}

\begin{lemma}\label{thm3.2}Let  $\gamma\in \mathbb{N}\cup\{0\}$, $r\in [1,\,\infty)$, and $U$ be an operator. Suppose that for any  bounded function $f$ with compact support, there exists a sparse family of cubes $\mathcal{S}$, such that for any function $g\in L^{r}_{{\rm loc}}(\mathbb{R}^n)$,
\begin{eqnarray}\label{eq3.2}\Big|\int_{\mathbb{R}^n}Uf(x)g(x)dx\Big|\leq \mathcal{A}_{\mathcal{S};\,L(\log L)^{\gamma},\,L^{r}}(f,\,g).\end{eqnarray}
Then for any $w$ with $w^{r}\in A_1(\mathbb{R}^n)$, $\alpha>0$ and bounded function $f$ with compact support,
\begin{eqnarray*}&&w(\{x\in\mathbb{R}^n:\, |Uf(x)|>\alpha\})\lesssim_{n,\,\,\gamma,\,w}\int_{\mathbb{R}^d}\frac{|f(x)|}{\alpha}\log ^{\gamma}\Big({\rm e}+\frac{|f(x)|}{\alpha}\Big)w(x)dx.\end{eqnarray*}
\end{lemma}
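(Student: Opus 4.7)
The plan is to combine the sparse hypothesis with a dualization against $w\chi_E$ and then run a principal-cube stopping argument, using the fact that $w^r\in A_1$ lets one absorb the $L^r$-average of $w$ into a pointwise infimum. By the scaling $\|tf\|_{L(\log L)^\gamma,Q}=t\|f\|_{L(\log L)^\gamma,Q}$ of the Luxemburg norm for $t>0$, I may replace $f$ by $f/\alpha$ and reduce to proving
\[
w\bigl(\{x:|Uf(x)|>1\}\bigr)\lesssim_{n,\gamma,w}\int_{\mathbb{R}^n}|f(x)|\log^{\gamma}({\rm e}+|f(x)|)\,w(x)\,dx.
\]
Set $E=\{|Uf|>1\}$. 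The inclusion $w^r\in A_1\subset L^1_{\mathrm{loc}}$ forces $w\in L^r_{\mathrm{loc}}$, so the test function $g=\operatorname{sgn}(Uf)\,w\chi_E$ lies in $L^r_{\mathrm{loc}}(\mathbb{R}^n)$ and is admissible in (\ref{eq3.2}). Combining $|Uf|>1$ on $E$ with the hypothesis gives
\[
w(E)\le\int_E|Uf|\,w\,dx=\int_{\mathbb{R}^n}Uf\cdot g\,dx\le\sum_{Q\in\mathcal{S}}|Q|\,\|f\|_{L(\log L)^\gamma,Q}\,\langle w\chi_E\rangle_{Q,r}.
\]

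The $A_1$ condition on $w^r$ then converts the $L^r$-average on the right into a pointwise infimum,
\[
\langle w\chi_E\rangle_{Q,r}\le\langle w\rangle_{Q,r}=\Bigl(\frac{1}{|Q|}\int_Q w^r\Bigr)^{1/r}\le[w^r]_{A_1}^{1/r}\operatorname*{ess\,inf}_Q w,
\]
and the $\eta$-sparseness of $\mathcal{S}$ supplies pairwise disjoint $E_Q\subset Q$ with $|E_Q|\ge\eta|Q|$, so $|Q|\operatorname*{ess\,inf}_Q w\le\eta^{-1}w(E_Q)$. Therefore
\[
w(E)\lesssim_{n,w}\sum_{Q\in\mathcal{S}}w(E_Q)\,\|f\|_{L(\log L)^\gamma,Q}.
\]

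It remains to bound this Carleson-type sum by the desired Orlicz integral, which I would do by a principal-cube stopping construction: declare $P\in\mathcal{S}$ principal when $\|f\|_{L(\log L)^\gamma,P}$ exceeds twice its value on the minimal principal ancestor of $P$, and collapse the sum by grouping each $Q\in\mathcal{S}$ according to its principal ancestor $\pi(Q)\in\mathcal{P}$. Since the $E_Q$ with $\pi(Q)=P$ are pairwise disjoint subsets of $P$,
\[
w(E)\lesssim\sum_{P\in\mathcal{P}}\|f\|_{L(\log L)^\gamma,P}\,w(P).
\]
Because $w^r\in A_\infty$, the family $\mathcal{P}$ is also $w$-sparse, so $w(P)\lesssim w(\tilde E_P)$ for pairwise disjoint $\tilde E_P\subset P$, and the defining identity $|P|\,\|f\|_{L(\log L)^\gamma,P}=\int_P|f|\log^\gamma({\rm e}+|f|/\|f\|_{L(\log L)^\gamma,P})\,dx$ combined with the generalized Hölder inequality (\ref{eq1.final}) reduces each term to $\int_P|f|\log^\gamma({\rm e}+|f|)\,w\,dx$ up to a factor controlled by the stopping-time gap. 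Summing the resulting geometric series over $P\in\mathcal{P}$ yields the desired estimate. The main obstacle is precisely this principal-cube reduction: the naive pointwise bound $w(E)\lesssim\int M^{\mathcal{D}}_{L(\log L)^\gamma}f\cdot w\,dx$ is divergent for generic $f\in L(\log L)^\gamma(w)$, so the stopping-time construction is indispensable both to prevent divergence and to restore the correct endpoint Orlicz scaling.
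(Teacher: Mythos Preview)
Your approach differs from the paper's: the paper simply quotes Theorem~3.2 of \cite{hulai}, which furnishes a Fefferman--Stein type inequality
\[
w(\{|Uf|>1\})\lesssim C(p_1,t,\gamma,r)\int_{\mathbb{R}^n}|f(y)|\log^\gamma({\rm e}+|f(y)|)\,M_tw(y)\,dy
\]
for suitable parameters $t>1$, $p_1\in(1,r')$, and then chooses $t=r(1+\epsilon)$ with $w^{r(1+\epsilon)}\in A_1$ (available by reverse H\"older) so that $M_tw\lesssim_w w$ pointwise. All the analytic content is inside the cited result.

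Your direct argument, however, has a genuine gap at the principal-cube step. After arriving at $\sum_{P\in\mathcal{P}}\|f\|_{L(\log L)^\gamma,P}\,w(P)$ you claim this is controlled by $\int|f|\log^\gamma({\rm e}+|f|)\,w$ by ``summing a geometric series,'' but this inequality is false in general. Take $\gamma=0$, $r=1$, $w\equiv1$, $f=\chi_{[0,1]}$ on $\mathbb{R}$, and a sparse family containing the intervals $I_k=[0,2^k]$ for $k\ge0$: every $I_k$ is principal (the average exactly doubles at each step down), and $\langle|f|\rangle_{I_k}\,|I_k|=2^{-k}\cdot2^k=1$ for every $k$, so the sum diverges while $\int|f|\,w=1$. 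The underlying problem is that in passing from $\langle w\chi_E\rangle_{Q,r}$ to $\operatorname*{ess\,inf}_Qw$ you discarded the factor $\chi_E$, which is the only link back to $w(E)$; without it the sparse sum is essentially $\int M_{L(\log L)^\gamma}f\cdot w$, which you yourself flag as divergent. The stopping construction does not repair this: principal cubes freeze the Luxemburg norm along descending chains but do nothing about the unbounded chain of \emph{large} cubes with \emph{small} average. Endpoint weak-type bounds for sparse forms genuinely require something extra---either a Calder\'on--Zygmund decomposition of $f$ at the given level, an absorption argument that retains $\chi_E$ and exploits reverse H\"older, or the extrapolation-to-$M_tw$ route that the paper takes via \cite{hulai}.
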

\begin{proof} By Theorem 3.2 in \cite{hulai}, we  know that $U$ satisfies the following estimate:
\begin{eqnarray}\label{ine:3.2}
w(\{x\in\mathbb{R}^d:\,|Uf(x)|>1\})&\lesssim &\Big(1+ \Big\{p_1'^{1+\gamma}\big(\frac{p_1'}{r}\big)'\big(t\frac{p_1'/r-1}{p_1'-1}\big)'^{\frac{1}{p_1'}}\Big\}^{p_1}\Big)\nonumber\\
&&\qquad\times\int_{\mathbb{R}^n}|f(y)|\log^{\gamma}({\rm e}+|f(y)|)M_tw(y)dy,
\end{eqnarray}
where $t\in [1,\,\infty)$, $p_1\in (1,\,r')$ such that
$t\frac{p_1'/r-1}{p_1'-1}>1$, and   $M_t$  is defined by
$$M_rf(x)=\big[M(|f|^r)(x)\big]^{1/r}.$$ Let $w^{r}\in A_1(\mathbb{R}^n)$. We choose $\epsilon>0$ such that $w^{r(1+\epsilon)}\in A_1(\mathbb{R}^n)$. Set $t=r(1+\epsilon)$ and $p_1'=2(r-1)\frac{1+\epsilon}{\epsilon}+1$. Then $t\frac{p_1'/r-1}{p_1'-1}=1+\frac{\epsilon}{2}.$ We   obtain from (\ref{ine:3.2}) that
$$w(\{x\in\mathbb{R}^d:\,|Uf(x)|>1\})\lesssim_{n,\gamma,\,w}\int_{\mathbb{R}^n}|f(y)|\log^{\gamma}({\rm e}+|f(y)|)w(y)dy.$$
This, via homogeneity, leads to our desired conclusion.\qed
\end{proof}

{\it Proof of Theorem \ref{thm1.1}}.  By homogeneity, we may assume that $\|\Omega\|_{L^q(S^{n-1})}=1=\|b\|_{{\rm BMO}(\mathbb{R}^n)}$. Let $w^{q'}\in A_1(\mathbb{R}^n)$. We choose $\varepsilon>0$  such that $\varepsilon\in (0,\,\min\{1,\,(q-1)/3\})$ and  $w^{q'(1+\varepsilon)}\in A_1(\mathbb{R}^n)$. On the other hand,
By Lemma \ref{yinli2.2} and Lemma \ref{lem2.4}, we know that for any  $p_0\in (0,\,q/(1+2\varepsilon))$,
$$\|\mathscr{M}_{p_0,\,T_{\Omega}}f\|_{L^1(\mathbb{R}^n)}\lesssim 2^{4\frac{1}{1-p_0\frac{1+2\varepsilon}{q}}}\|f\|_{L^1(\mathbb{R}^n)}.$$
Take $p_0=q/(1+3\varepsilon)$ and $r=\frac{q-(1+3\varepsilon)}{q-1}(1+\varepsilon)$, then $rp_0'=(1+\varepsilon)q'$. Applying Theorem \ref{thm3.1} with such indices $p_0$ and $r$,  we see that   for any bounded function $f$ with compact support, there exists a sparse family of cubes $\mathcal{S}$, such that for any  $g\in L_{{\rm loc}}^{q'(1+\varepsilon)}(\mathbb{R}^n)$,
$$\Big|\int_{\mathbb{R}^n}T_bf(x)g(x)dx\Big|\lesssim p_0'r'2^{4\frac{1+3\varepsilon}{\varepsilon}}\mathcal{A}_{\mathcal{S};\,L\log L,\,L^{q'(1+\varepsilon)}}(f,\,g).
$$
Theorem \ref{thm1.1} now follows from Lemma \ref{thm3.2} immediately.\qed

{\it Proof of Theorem \ref{thm1.2}}.  Again we
assume that $\|\Omega\|_{L^{\infty}(S^{n-1})}=1=\|b\|_{{\rm BMO}(\mathbb{R}^n)}$.
Let $s\in (1,\,\infty)$. Applying (\ref{eq1.6}) and Theorem \ref{thm3.1} (with $p_0=(\sqrt s)'$ and $r=\sqrt s$), we know that for bounded function $f$ with compact support, there exists a $\frac{1}{2}\frac{1}{3^n}$-sparse family of cubes $\mathcal{S}=\{Q\}$,  and functions ${\rm H}_1f$, ${\rm H}_2f$, such that for each   function $g\in L^s_{{\rm loc}}(\mathbb{R}^n)$,
$$\Big|\int_{\mathbb{R}^n}{\rm H}_1f(x)g(x)dx\Big|\lesssim (\sqrt{s})'^2\mathcal{A}_{\mathcal{S};\,L^1,\,L^s}(f,\,g)\lesssim s'^2\mathcal{A}_{\mathcal{S};\,L^1,\,L^s}(f,\,g),$$
$$\Big|\int_{\mathbb{R}^n}{\rm H}_2f(x)g(x)dx\Big|\lesssim (\sqrt{s})'\mathcal{A}_{\mathcal{S};\,L\log L,\,L^{\sqrt{s}}}(f,\,g)\lesssim
s'\mathcal{A}_{\mathcal{S};\,L\log L,\,L^{s}}(f,\,g),$$
and for a. e. $x\in\mathbb{R}^n$,
$$T_{\Omega,b}f(x)={\rm H}_1f(x)+{\rm H}_2f(x).$$
Let $w\in A_1(\mathbb{R}^n)$, $\lambda>0$,  $f$ be a bounded function with compact support. It follows from   Lemma \ref{lem2.5}  that
\begin{eqnarray*}
&&w(\{x\in\mathbb{R}^n:\,|T_{\Omega,\,b}f(x)|>\lambda\})\\&&\quad\le w(\{x\in\mathbb{R}^n:\,|{\rm H}_1f(x)|>\lambda/2\})
+w(\{x\in\mathbb{R}^n:\,|{\rm H}_2f(x)|>\lambda/2\})\\
&&\quad\lesssim [w]_{A_1}[w]^2_{A_{\infty}}\log ({\rm e}+[w]_{A_{\infty}})\int_{\mathbb{R}^n}\frac{|f(x)|}{\lambda}w(x)dx\\
&&\qquad+[w]_{A_1}[w]_{A_{\infty}}\log^2 ({\rm e}+[w]_{A_{\infty}})\int_{\mathbb{R}^n}\frac{|f(x)|}{\lambda}\log\Big({\rm e}+\frac{|f(x)|}{\lambda}\Big)w(x)dx\\
&&\quad\lesssim[w]_{A_1}[w]^2_{A_{\infty}}\log({\rm e}+[w]_{A_{\infty}})\int_{\mathbb{R}^n}\frac{|f(x)|}{\lambda}\log\Big({\rm e}+\frac{|f(x)|}{\lambda}\Big)w(x)dx.
\end{eqnarray*}
This completes the proof of Theorem \ref{thm1.2}.\qed

\medskip

{\bf Acknowledgement}  The research of the second author was supported by the NNSF of
China under grant \#11771399, and the research of the third author was supported by
the NNSF of
China under grant $\#$11871108.


\begin{thebibliography}{99}
\bibitem{abkp} J. Alvarez, R. J. Babgy, D. Kurtz and C. P\'erez, Weighted estimates for commutators of linear operators, Studia Math.
\textbf{104} (1993), 195-209.
\bibitem{cz1}A. P. Calder\'on and A. Zygmund, On the existence of
certain singular integrals, Acta Math. \textbf{88} (1952), 85-139.
\bibitem{cz2} A. P. Calder\'on and A. Zygmund, On  singular
integrals, Amer. J. Math. \textbf{78} (1956), 289-309.

\bibitem{chr2} M. Christ and J.-L. Rubio de Francia, Weak type (1,\,1) bounds for rough operators, II, Invent. Math. \textbf{93} (1988), 225-237.
\bibitem{cpp} D. Chung, M. C. Pereyra, and C. P\'erez, Sharp bounds for general commutators on weighted
Lebesgue spaces, Trans. Amer. Math. Soc. \textbf{364} (2012), 1163-1177.
\bibitem{crw} R. R. Coifman, R. Rochberg and G. Weiss, Factorization theorems for Hardy
spaces in several variables, Ann. of Math., \textbf{103} (1976), 611-635.
\bibitem{duo} J. Duoandikoetxea, Weighted norm inequalities for homogeneous singular integrals, Trans. Amer.
Math.  Soc. \textbf{336} (1993), 869-880.
\bibitem{drf} J. Duoandikoetxea and J. L. Rubio de Francia,
Maximal and singular integrals via Fourier transform
estimates, Invent. Math. \textbf{84} (1986), 541-561.
\bibitem{fp} D. Fan and Y. Pan, Singular integral operators with
rough kernels supported by subvarieties, Amer. J. Math.
\textbf{119} (1997), 799-839.
\bibitem{gra2}  L. Grafakos,  Classical Fourier Analysis, GTM249, 2nd
Edition, Springer, New York, 2008.
\bibitem{gra}  L. Grafakos,  Modern Fourier Analysis, GTM250, 2nd
Edition, Springer, New York, 2008.
\bibitem{gs} L. Grafakos and A. Stefanov,  $L^p$ bounds for singular integrals and maximal singular
integrals with rough kernels, Indiana Univ. Math. J. \textbf{47}
(1998), 455-469.
\bibitem{hu1} G. Hu, $L^p(\mathbb{R}^n)$ boundedness for the commutator of a homogeneous singular integral, Studia Math. \textbf{154} (2003), 13-47.
\bibitem{hulai} G. Hu, X. Lai and Q. Xue, Weighted bounds for the compositions of rough singular integral operators, J. Geom. Anal., to appear, available at arXiv:
    1811.02878.
\bibitem{hlp} T. Hyt\"onen, M. T. Lacey and C. P\'erez, Sharp weighted bounds for the $q$-variation of singular integrals, Bull. Lond. Math. Soc. \textbf{45} (2013), 529-540.
\bibitem{hrt}T. Hyt\"onen, L. Roncal, and O. Tapiola, Quantitative weighted estimates for
rough homogeneous singular integrals,  Israel J. Math. \textbf{218} (2017), 133-164.
\bibitem{john} F. John, Quasi-isometric mappings, In: 1965 Seminari 1962/63 Anal. Alg. Geom. e Topol., 2, Ist. Naz. Alta Mat., Ediz. Cremonese, Rome, pp. 462-473.
\bibitem{ler4} A. K. Lerner, A weak type estimates for rough singular integrals, Rev. Mat. Iberoam. \textbf{35} (2019),  1583-1602.
\bibitem{lpr} K. Li, C. P\'erez, Isreal P. Rivera-Rios and L. Roncal, Weighted norm inequalities for rough singular integral operators, J. Geom. Anal. \textbf{29} (2019), 2526-2564.
\bibitem{prr}C. P\'erez, I. P. Rivera-Rios, L. Roncal, $A_1$ theory of weights for rough
homogeneous singular integrals and commutators, Annali della Scuola normale superiore di Pisa, Classe di scienze. DOI: 10.2422/2036-2145.201608-011.
\bibitem{perez1} C. P\'erez,  Endpoint estimates for commutators of singular integral operators, J. Funct. Anal. \textbf{128}
(1995), 163-185.
\bibitem{rr}M.  Rao and Z. Ren, Theory of Orlicz spaces, Monographs and Textbooks in Pure
and Applied Mathematics, 146, Marcel Dekker Inc., New York, 1991.

\bibitem{rw}F. Ricci and G. Weiss,  A characterization of
$H^1(S^{n-1})$, Proc. Sympos. Pure Math. of Amer. Math. Soc., (S.
Wainger and G. Weiss eds), Vol 35 I(1979), 289-294.
\bibitem{riv} I. P. Rivera-R\'ios, Improved $A_1-A_{\infty}$ and related estimate for commutators of rough singular integrals, Proc. Edinburgh Math. Soc. \textbf{61} (2018), 1069-1086.
\bibitem{se} A. Seeger,  Singular integral operators with rough convolution kernels, J.
Amer. Math. Soc. \textbf{9} (1996), 95-105.
\bibitem{stro}J. O. Str\"omberg, Bounded mean oscillation with Orlicz norms and duality of Hardy spaces,
Indiana Univ. Math. J. \textbf{28} (1979), 511-544.
\bibitem{var} A. M. Vargas, Weighted weak type $(1,\,1)$ bounds for rough operators, J. London Math. Soc.
54 (1996),   297-310.



\bibitem{wil}M. J. Wilson, Weighted inequalities for the dyadic square function without dyadic $A_{\infty}$, Duke Math. J. \textbf{55} (1987), 19-50.
\end{thebibliography}
\end{document}